\theoremstyle{plain}
\newtheorem{theorem}{\bf Theorem}[section]
\newtheorem*{theorem*}{Theorem}
\newtheorem{proposition}[theorem]{\bf Proposition}
\newtheorem{lemma}[theorem]{\bf Lemma}
\newtheorem{corollary}[theorem]{\bf Corollary}
\newtheorem{conjecture}[theorem]{\bf Conjecture}
\newtheorem*{conjecture*}{\bf Conjecture}
\theoremstyle{definition}
\newtheorem{remark}[theorem]{\bf Remark}
\theoremstyle{remark}
\newtheorem{example}[theorem]{\bf Example}
\theoremstyle{example}
\def \aa{{\alpha}}
\def \mfa{\mathfrak{a}}
\def \mfb{\mathfrak{b}}
\def \opc{{\operatorname{c}}}
\def\D{\Delta}
\def \G{{\Gamma}}
\def \HS{{\operatorname{HS}}}
\def \HP{{\operatorname{hp}}}
\def \hci{{\operatorname{hci}}}
\def \link{{\operatorname{link}}}
\def \Mon{{\operatorname{Mon}}}
\def \tr{{\operatorname{tr}}}
\def \ZZ{\mathbb Z}
\def \max{{\operatorname{max}}}
\begin{document}
\title{Determinantal  Schemes and Pure O-sequences}
\author[A. Constantinescu]{Alexandru Constantinescu}
\thanks{Both authors were supported by the Swiss National Science Foundation (Grant number 123393)}
\address{Institut f\"ur Mathematik und Informatik, Freie Universi\"at Berlin, Arnimallee 3, 14195  Berlin, Germany}
\email{aconstant@zedat.fu-berlin.de}
\urladdr{\href{http://userpage.fu-berlin.de/aconstant}{http://userpage.fu-berlin.de/aconstant}} 

\author[M. Mateev]{Matey Mateev}

\address{Institut de Math\'ematiques, Universit\`e de Neuch\^atel B\^atiment UniMail, Rue Emile-Argand 11
2000 Neuch\^atel, Switzerland}
\email{matey.mateev@unine.ch}
\urladdr{\href{http://math.unibas.ch/institut/personen/profil/profil/person/mateev/}{http://math.unibas.ch/institut/personen/profil/profil/person/mateev/}} 
\date{{\small \today}}
\subjclass[2010]{Primary: 
13C40,
13D40
; 
Secondary: 
05E40, 
06A07
}
\keywords{Standard determinantal scheme, $h$-vector, pure O-sequence}

\maketitle
\begin{abstract}
 We prove that if a standard determinantal scheme is level, then its $h$-vector is a log-concave  pure O-sequence, and conjecture that the converse also holds.
%
%
%
  Among other cases, we prove the conjecture in codimension two, or when the entries of the corresponding degree matrix are positive.  We also  find formulae for the  $h$-vector in terms of the degree matrix.
\end{abstract}

\section*{Introduction}
Classical determinantal rings have made their way from algebraic geometry to commutative algebra more than fifty years ago and have been an active research topic ever since. Over the years, the study has been extended to pfaffian ideals of generic skew-symmetric matrices and to  determinantal ideals of ladders, of symmetric matrices and of homogeneous polynomial matrices. Defining ideals of Segre varieties,  Veronese varieties,  rational normal scrolls and rational normal curves are all examples of such objects.  
 We refer  to the  books of W. Bruns and U. Vetter \cite{BV88},  of R.M. Mir\'o-Roig \cite{MiR08}, and of C. Baetica \cite{Bae06} for overviews of this  vast subject. 

We study the Hilbert functions of standard determinantal rings. 
Ideals  defined by the maximal minors of a homogeneous, polynomial,  $t\times(t+c-1)$ matrix  $M$ are called standard determinantal if they define a scheme of the "expected codimension", i.e. if their height is $c$. These ideals are   Cohen-Macaulay, and a graded minimal free resolution for them is given by the Eagon-Northcott complex \cite{EN62}. Their Hilbert function and their graded Betti numbers  are determined by the degrees of the polynomials in $M$.  Hilbert functions of  determinantal ideals have been studied, among many others, by 
S. Abhyankar \cite{Abh88},
W. Bruns, A. Conca and J. Herzog \cite{CH94, BC03},
S. Ghorpade  \cite{Gho96, Gho02},
N. Budur, M. Casanellas, and E. Gorla \cite{BCG04}.

Our main result (Theorem \ref{thm:main}) states that if in each column of $M$ all polynomials have the same degree, then the $h$-vector of the corresponding standard determinantal ring is a log-concave pure O-sequence.  The idea of the proof is to obtain the  $h$-vectors of such matrices  as  $h$-vectors of some representable matroids, and then use the results of J. Huh \cite{Hu12} for log-concavity, and those of the first author with M. Varbaro \cite{CV3} to prove that they are  pure O-sequences. 
We conjecture that the converse of this theorem also holds, namely if the $h$-vector of a standard determinantal ideal is a pure O-sequence, then all the degrees in each column of its defining matrix must be equal (Conjecture \ref{conj:main}). 


 A pure O-sequence is the Hilbert function of some monomial, artinian, level algebra. 
 Equivalently, a pure O-sequence  can be described as the $f$-vector of a pure multicomplex, or of a pure  order ideal. 
In \cite{Hi89}, T. Hibi proved  that if $(h_0,\dots,h_s)$ is a pure O-sequence, then $h_i \le h_{s-i}$ for all $i=0,\dots,\lfloor s/2\rfloor$.
Other than the Hibi inequalities  and some \emph{ad hoc} methods, we are not aware of any criteria which imply non-purity for an O-sequence. In most specific  examples, an exhaustive computer listing of all pure O-sequences with some fixed parameters is needed to check non-purity. Moreover, while a complete characterization of pure O-sequences is considered to "solve all basic problems of design theory" (G. Ziegler \cite[Exercise 8.16]{Zi95}), such a goal is expected to be  "nearly impossible" by several experts (see M. Boij, J. Migliore, R.M. Mir\'o-Roig, U.Nagel, F. Zanello  \cite{BMMNZ}).
The validity of Conjecture \ref{conj:main}, together with the computational formulae we find, would provide a fast  way to construct (for fixed codimension, socle degree and type) large families of  $O$-sequences which are not pure. 

The key to most of our proofs is provided by Lemma \ref{key}. Using a basic double link from Gorenstein liaison theory,
  we describe a recursive formula for the $h$-vector of the standard determinantal ring corresponding to $M$, in terms of $h$-vectors corresponding to submatrices of $M$. Using this lemma we find simple formulae for the length and the last entries of the $h$-vectors (Lemma \ref{lem:length} and Proposition \ref{prop:lastentries}), as well as an explicit formula for the $h$-polynomial for every standard determinantal ring (Proposition \ref{prop:h-poly}). 
  
Using the Eagon-Northcott resolution, we show that a standard determinantal ideal is level (i.e. its socle is concentrated in one degree) if and only if  in each column of $M$ all polynomials have the same degree. 
In the last part of the paper we prove several cases of Conjecture \ref{conj:main}. In particular,  we prove that the statement is true for matrices with all entries of positive degree and for matrices in which the degrees in the second row are strictly smaller than the degrees in the first row. 
Many of the results in this paper have been suggested and double-checked using intensive computer experiments done with CoCoA. \\

The authors thank Elisa Gorla for many helpful discussions and suggestions.

\section{Preliminaries}
We first recall most of the algebraic and geometric notions that we  use, and  then  prove the key lemma of this paper.\\

Let $\Bbbk$ be an infinite field, and $S=\Bbbk[x_{0},\ldots,x_{n}]$ be the polynomial ring over $\Bbbk$.  
For any two integers $t,c \ge 1$, a matrix $M$ of size $t\times (t+c-1)$, with polynomial entries, is called \emph{homogeneous} if it represents a homogeneous map of degree zero between   graded free $S$-modules 
$$\bigoplus_{i=1}^t S(b_i) \xrightarrow{~~M~~} \bigoplus_{j=1}^{t+c-1} S(a_j).$$
Let  $f_{i,j}\in S$ be the entries of $M$. The homogeneity condition implies that $\deg f_{i,j} = a_{j}-b_{i}$ for all $i,j$. Whenever $b_i > a_j$ we have $f_{i,j}=0$, and without loss  of generality we may assume that $M$ does not contain invertible elements (i.e. $f_{i,j}=0$ when $a_j=b_i$). Alternatively, a matrix with polynomial entries is homogeneous if and only if all its minors are homogenous polynomials (if and only if all its 2$\times$2 minors are homogeneous). 
We will denote by $I_{\max}(M)$ the ideal generated by the maximal minors of the matrix $M$. 

An ideal $I\subseteq S$ of height $c$ is a \emph{standard determinantal ideal} if it is generated by the maximal minors of a $t\times (t+c-1)$  homogeneous matrix. As these ideals are saturated, they define a projective  scheme $X\subset\mathbb{P}^n$. We call all such schemes \emph{standard determinantal schemes}.
The matrix $A=\left(a_{i,j}\right)\in\mathbb{Z}^{t\times (t+c-1)}$,
with $a_{i,j}=b_{j}-a_{i}$, is called the \emph{degree matrix} of the ideal
$I$.  We will assume that $a_{1}\leq\cdots\leq a_{t}$ and $b_{1}\leq\cdots\leq
b_{t+c-1}$, so the entries of $A$ increase from left to right and from the bottom to the top.
Since $a_{i,i} \le 0$ implies that all the minors containing the first $i$ columns are zero, we can assume without loss of generality  that $a_{i,i}>0$ for all $i$.

For the Hilbert series of a standard graded $\Bbbk$-algebra $S/I$ we will use the notation $\HS_{S/I}$.
We will write the Hilbert series in rational form as
$$\HS_{S/I}(z)={\displaystyle
\frac{\HP(z)}{(1-z)^{d}}},$$ where $d$ is the Krull dimension of $S/I$. The numerator
$\HP(z)=1+h_{1}z+h_{2}z^{2}+\cdots+h_{s}z^{s}$, with $h_s\neq 0$,  is called the \emph{$h$-polynomial }of $S/I$
and  its coefficients form  the \emph{$h$-vector} of $S/I$, $h_{S/I}=(h_0,h_1,\ldots,h_s)$.
The degree matrix $A$ of $I$ determines the minimal free resolution of $S/I$ (given by the Eagon-Northcott
complex) and therefore also $h_{S/I}$. As in this paper we study the $h$-vectors of standard determinantal ideals,  we will write $h^A$ and $\HP^A(z)$
instead of $h_{S/I_{max}(M)}$, respectively $\HP_{S/I_{max}(M)}(z)$. We denote  by $\tau(h^A)$ the degree of the $h$-polynomial.

Our key lemma is based on liaison theory.  We recall here briefly the notion of  basic double link. If
$\mfb\subseteq \mfa\subseteq S$ are two homogeneous ideals such
that $\mfb$ is Cohen-Macaulay, $ht(\mfa)=ht(\mfb)+1$, and $f\in 
NZD_S(S/\mfb)$ is a homogeneous non-zero-divisor, then the ideal
$I=f\cdot\mfa+\mfb$ is called a \emph{basic double link} 
of $\mfa$. 
The terminology is motivated by Gorenstein liaison theory: In the above notation,
$I$ can be Gorenstein linked to $\mfa$ in two steps if $\mfa$ is unmixed, and $S/\mfb$ is Cohen-Macaulay and generically Gorenstein (see  \cite[Proposition 5.6]{KMMNP01} and \cite[Theorem
3.5]{Ha07}).
 In \cite{Go07},  Gorla constructed basic double links in which all the ideals involved are standard determinantal (see also \cite{KMMNP01} for more general results in this direction). 
We will use this construction to prove the following recursive formula for the $h$-vector of a standard determinantal ideal.

For any matrix $A$ and positive integers $k$ and $l$ we use the following notation: 
$A^{(k,l)}$ is the matrix obtained from $A$  by deleting the $k$-th row
and $l$-th column. By convention, $A^{(k,0)}$ (respectively $A^{(0,l)}$) means that
only
the $k$-th row (respectively the $l$-th column) has been deleted.

\begin{lemma}\label{key}
Let $A=\left(a_{i,j}\right)\in\mathbb{Z}^{t\times (t+c-1)}$ be a degree matrix.
For any $k=1,\ldots,t$ and\\ $l=1,\ldots,t+c-1$, such that
$a_{k,l}\geq 0$, we have 
$$\HP^{A}(z)=z^{a_{k,l}}\HP^{A^{(k,l)}}(z)+(1+\cdots+z^{a_{k,l}-1})\HP^{A^{(0,l)} } (z).$$
\end{lemma}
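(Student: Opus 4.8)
The plan is to present $I := I_{\max}(M)$, the standard determinantal ideal with degree matrix $A$, as a basic double link and then translate the resulting Hilbert series identity into the stated relation between $h$-polynomials. Since $h^{A}$ depends only on the degree matrix and not on the particular homogeneous entries of $M$, I am free to work with whatever representative of $A$ is most convenient. Fix $k,l$ with $d := a_{k,l}\geq 0$, and set $\mfa := I_{\max}(M^{(k,l)})$ and $\mfb := I_{\max}(M^{(0,l)})$; their degree matrices are exactly $A^{(k,l)}$ and $A^{(0,l)}$. Comparing formats, $M^{(k,l)}$ is $(t-1)\times(t+c-2)$ and $M^{(0,l)}$ is $t\times(t+c-2)$, so that $\height\mfa=c=\height I$ while $\height\mfb=c-1$, and all three quotients are Cohen--Macaulay. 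The heart of the argument --- and the step I expect to cost the most --- is to realize the basic double link itself: following Gorla's construction \cite{Go07} I would choose the representative of $A$ so that the $(k,l)$ entry $f:=f_{k,l}$, a homogeneous form of degree $d$, is a non-zero-divisor on $S/\mfb$ and
$$I=f\cdot\mfa+\mfb.$$
Verifying this equality (a generic choice of $M$ does not suffice; one must place suitable zeros in $M$, as Gorla does) together with $\mfb\subseteq\mfa$ and the non-zero-divisor property is exactly what makes $I$ a basic double link of $\mfa$ along $V(\mfb)$.

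Granting the basic double link, the Hilbert series identity is formal. Multiplication by $f$ yields the short exact sequence
$$0\to (S/\mfa)(-d)\xrightarrow{\ \cdot f\ }S/I\to S/(\mfb+(f))\to 0,$$
where injectivity uses that $f$ is a non-zero-divisor modulo $\mfb$ together with $\mfb\subseteq\mfa$, and where the cokernel is $S/(\mfb+(f))$ because $I+(f)=\mfb+(f)$. Since $f$ is moreover a non-zero-divisor on $S/\mfb$, the sequence $0\to(S/\mfb)(-d)\xrightarrow{\cdot f}S/\mfb\to S/(\mfb+(f))\to 0$ gives $\HS_{S/(\mfb+(f))}(z)=(1-z^{d})\,\HS_{S/\mfb}(z)$. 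Adding Hilbert series across the first sequence and substituting, I obtain the standard basic double link formula
$$\HS_{S/I}(z)=z^{d}\,\HS_{S/\mfa}(z)+(1-z^{d})\,\HS_{S/\mfb}(z).$$

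Finally I would pass from Hilbert series to $h$-polynomials by clearing denominators. One has $\dim S/I=\dim S/\mfa=n+1-c$, whereas $\dim S/\mfb=n+2-c$ is larger by one. Writing each Hilbert series in the form $\HP/(1-z)^{\dim}$ and multiplying the displayed identity through by $(1-z)^{n+1-c}$ turns it into
$$\HP^{A}(z)=z^{d}\,\HP^{A^{(k,l)}}(z)+\frac{1-z^{d}}{1-z}\,\HP^{A^{(0,l)}}(z),$$
and $\frac{1-z^{d}}{1-z}=1+z+\cdots+z^{d-1}$ gives precisely the claimed formula. The only remaining point is the degenerate case $d=a_{k,l}=0$, where the second summand is an empty sum and the assertion collapses to $\HP^{A}(z)=\HP^{A^{(k,l)}}(z)$; by the monotonicity of $A$ and the normalization $a_{i,i}>0$ this can occur only strictly left of the diagonal, and there it is checked directly rather than through the link.
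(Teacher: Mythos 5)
Your treatment of the case $a_{k,l}>0$ is correct and is essentially the paper's argument: specialize the homogeneous matrix so that the column through the pivot entry is zero outside position $(k,l)$ (Gorla's construction), verify $I=f\cdot\mfa+\mfb$ with $\mfa=I_{\max}(M^{(k,l)})$ and $\mfb=I_{\max}(M^{(0,l)})$, and convert the basic double link identity $\HS_{S/I}(z)=z^{d}\HS_{S/\mfa}(z)+(1-z^{d})\HS_{S/\mfb}(z)$ into the claimed relation among $h$-polynomials. Your short exact sequence $0\to(S/\mfa)(-d)\xrightarrow{\cdot f}S/I\to S/(\mfb+(f))\to 0$ is an equivalent repackaging of the sequence $0\to\mfb(-d)\to\mfa(-d)\oplus\mfb\to I\to 0$ used in the paper, your height and containment checks ($\mfb\subseteq\mfa$ by Laplace expansion, $\height\mfb=c-1$) are right, and so is the dimension bookkeeping when clearing denominators.

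The genuine gap is the case $a_{k,l}=0$, which you dismiss with ``it is checked directly rather than through the link.'' This is not a checkable one-liner. Under the paper's normalization a degree-zero entry means $f_{k,l}=0$ (units are excluded), so the link construction degenerates — there is no candidate non-zero-divisor $f$ — and the assertion $\HP^{A}(z)=\HP^{A^{(k,l)}}(z)$ is a substantive identity: it is precisely what later justifies deleting zero entries from degree matrices (Remark \ref{rem:recursive}). The paper proves it by a double induction on $t$ and $c$: for $t>2$ one applies Case~1 at a positive diagonal entry $a_{i,i}$ with $i\neq k,l$ and invokes the inductive hypothesis for $A^{(i,i)}$ and $A^{(0,i)}$; the base case $t=2$ requires an explicit computation with complete-intersection $h$-polynomials in which the homogeneity relation $a_{1,1}+a_{2,c+1}=a_{2,1}+a_{1,c+1}$ is exactly what makes the two sides agree (and a separate check for $c=1$ via the trace). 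An alternative ``direct'' route does exist — choose a representative with $f_{k,l}=1$ and use that Gaussian elimination on a unit entry preserves the ideal of maximal minors, so $I_{\max}(M)=I_{\max}(M')$ with $M'$ homogeneous of degree matrix $A^{(k,l)}$ — but some such argument has to be actually written down and its height/genericity hypotheses verified. As it stands, your proposal establishes the lemma only for $a_{k,l}>0$.
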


\begin{proof}  We distinguish two cases:\\
\emph{Case 1:} $a_{k,l}>0$.
Without loss of generality we can assume that $k,l=1$. Consider the homogeneous matrix 
\begin{center}
$M=\begin{pmatrix}f_{1,1} & f_{1,2} & \cdots &  & f_{1,t+c-1}\\
0 & f_{2,2} & \cdots &  & f_{2,t+c-1}\\
\\
\vdots & \vdots &  &  & \vdots\\
\\
\\
0 & f_{t,2} & \cdots &  & f_{t,t+c-1}
\end{pmatrix}$,
\end{center}
where the $f_{i,j}$'s are generically chosen forms in $S=\Bbbk[x_0,\dots,x_n]$, with $n\ge c-1$ and $\deg(f_{i,j})=a_{i,j}$. Such forms exist when because the field $\Bbbk$ is infinite. 
Let  $\mfa=I_{\max}(M^{(1,1)})$ and $\mfb=I_{\max}(M^{(0,1)})$, be two ideals which by the generic choice of the forms $f_{i,j}$ are standard determinantal.
Thus, by construction, we have $ht(\mfb)=ht(\mfa)-1$ and 
$f_{1,1}$ is a non-zero-divisor in $S/\mfb$.
If $I=I_{\max}(M)$, then by direct computation on the generators we obtain that 
$$I=f_{1,1}\mfa+\mfb,$$ 
so $I$ is a basic double link of $\mfa$. 
By \cite[Theorem 3.1]{Go07},  the ideal $I$ is also standard determinantal. 
Notice that the corresponding degree matrices of $I$, $\mfa$ and $\mfb$ are $A$, $A^{(1,1)}$, respectively $A^{(0,1)}$.
From the short exact sequence
$$\xymatrix{0\ar[r]&\mfb(-a_{1,1})\ar[r]&\mfa(-a_{1,1})\oplus \mfb\ar[r]&I\ar[r]&0,}$$
where the first map is given by  $g\mapsto(g,f_{1,1}\cdot g)$ and
the second by
$(g,h)\mapsto gf_{1,1}-h$, it follows that, if $d=n-c+1$, then
\begin{align}\HS_{S/I}(z)= \frac{\HP^A}{(1-z)^{d}}
&=z^{a_{1,1}}\HS_{S/\mfa}(z)+(1-z^{a_{1,1}})\HS_{S/\mathfrak { b } } (z)\notag\\
 &={\displaystyle \frac{z^{a_{1,1}}\HP^{A^{(1,1)}}(z)}{(1-z)^{d}}+{\displaystyle \frac{(1-z^{a_{1,1}})\HP^{A^{(0,1)}}(z)}{(1-z)^{d+1}}}}\notag\\
  &=\displaystyle{\frac{z^{a_{1,1}}\HP^{A^{(1,1)}}(z)+(1+\cdots+z^{a_{1,1}-1})\HP^ {A^{(0,1)}}(z)}{ (1-z)^{d} } }\notag \end{align}
 and we conclude.\\

\noindent\emph{Case 2:} $a_{k,l}=0$. By induction on $t$ and $c$ we will show that
 $\HP^A(z)=\HP^{A^{(k,l)}}(z)$.
By the ordering of the entries in $A$, and because $a_{i,i}>0$ for all $i$, if  $a_{k,l}=0$, then  $k>l$ (i.e.  $a_{k,l}$ lies below the diagonal).

When $c=1$, the $h$-vector corresponding to $A$ is just a sequence of 1's of length
$\tr(A)={ \sum_{i=1}^{t}a_{i,i}}$. 
Notice that
$$\tr(A^{(k,l)})={\displaystyle \sum_{i=1}^{l-1}a_{i,i}}+{\displaystyle
\sum_{i=l}^{k-1}a_{i,i+1}}+{\displaystyle {\displaystyle
\sum_{i=k+1}^{t}a_{i,i}}}.$$
By the homogeneity of $A$, we have $\tr(A^{(k,l)})=\tr(A^{(k,l)})+a_{k,l}=\tr(A)$.

The first row has only positive entries, so $t\ge2$.
For $t=2$, since $a_{2,1}=0$,  from \emph{Case 1} applied to the indices $(2,c+1)$, it
follows that
\begin{equation}\label{t=2}
\HP^A(z) = z^{a_{2,c+1}}\HP^{\left(a_{1,1},\dots,a_{1,c}\right)}(z) + (1+\dots+z^{a_{2,c+1}-1})\HP^{A^{(0,c+1)}}(z).
\end{equation}
The $h$-polynomial of a 1-row degree matrix is the $h$-polynomial of the corresponding complete intersection, namely:
$$\HP^{\left(a_{1,1},\dots,a_{1,c}\right)}(z)= \prod_{i=1}^{c}\left(1+\dots+z^{a_{1,i}-1}\right).$$
By induction on $c$ we have $\HP^{A^{(0,c+1)}}(z) = \HP^{\left(a_{1,2},\dots,a_{1,c}\right)}(z)$, so \eqref{t=2} becomes
\begin{eqnarray*}
\HP^A(z) &=& z^{a_{2,c+1}} \prod_{i=1}^{c}\left(1+\dots+z^{a_{1,i}-1}\right) + \left(1+\dots+z^{a_{2,c+1}-1}\right) \prod_{i=2}^{c}\left(1+\dots+z^{a_{1,i}-1}\right)\\
&= &\left(1+\dots + z^{a_{1,1}+a_{2,c+1}-1}\right)\prod_{i=2}^{c}\left(1+\dots+z^{a_{1,i}-1}\right).
\end{eqnarray*}
As $A$ corresponds to the degrees in a homogeneous matrix, we have $a_{1,1}+a_{2,c+1} = a_{2,1}+a_{1,c+1}$ and we conclude.
%
%
%
%
%

When $t>2$,  there exists some positive entry $a_{i,i}$, with $i\neq k,l$.
The matrices $A^{(i,i)}$ and $A^{(0,i)}$ contain $a_{k,l} = 0$. Applying \emph{Case 1} for $a_{i,i}$, and using the induction on $t$ and $c$ we obtain
\begin{eqnarray*}
\HP^{A}(z)&=&z^{a_{i,i}}\HP^{A^{(i,i)}}(z)+\left(1+\cdots+z^{a_{i,i}-1}\right)\HP^{A^{(0,i)}}(z)\\
&=&z^{a_{i,i}}\HP^{(A^{(i,i)})^{(k,l)}}(z)+\left(1+\cdots+z^{a_{i,i}-1}\right)\HP^{(A^{(0,i)})^{(k,l)}}(z)\\
&=&\HP^{A^{(k,l)}}(z)
\end{eqnarray*}
\end{proof}

\begin{remark}\label{rem:recursive}  Lemma \ref{key}
implies the following recursive formula for the $h$-vector of $A$:
$$h^{A}_i=h^{A^{(k,l)}}_{i-a_{k,l}}+\displaystyle{\sum_{k=0}^{a_{k,l}-1}
h^{A^{(0,l)}}_{i-k}}.$$
In particular, if  some entry $a_{k,l}=0$, then $h^A=h^{A^{(k,l)}}$.
 As we are interested in studying the h-vectors of standard
determinantal ideals,  we may assume from now on that none of the degree matrices contain zeros.
\end{remark}

\section{Formulae}

In this section we find a general formula for the $h$-polynomial in terms of the entries of the degree matrix (Proposition \ref{prop:h-poly}). We then compute the length and the last entry of the $h$-vector (Lemma \ref{lem:length}). Finally, we give a more explicit description of the last entries of the $h$-vector when all the rows in the degree matrix are equal (Proposition \ref{prop:lastentries}). 

Recall that the $h$-polynomial of a complete intersection generated in degrees $(d_1,\dots,d_c)$ is
\begin{equation}\label{eq:ci}
 \HP^{(d_1,\dots,d_c)}(z) = \prod_{i=1}^{c}\left(1+z+\dots+z^{d_i-1}\right).
 \end{equation}
We fix the following notation.
Let $a,b\ge 0$ be two integers. For any increasing sequence of integers $0<i_1<\dots<i_{b}<a+b$ and any matrix $A=\left(a_{i,j}\right)\in\ZZ^{a\times (a+b)}$,   we define two ordered sets of integers:
\begin{eqnarray*}
\{j_1,\dots,j_{i_b-b}\} &=& \{1,\dots,i_{b}\}\setminus\{i_1,\dots,i_{b}\}\\
\textup{g}_A(i_1,\ldots,i_{b}) &=& \{a_{i_1,i_1},\,\,a_{i_2-1,i_2},\,\,\ldots,\,\, a_{i_{b}-(b-1),i_
{b}},\,\,\sum_{i=i_{b}-(b-1)}^{a} a_{i,i+b}\}\\
\end{eqnarray*}
To the first set we associate a nonnegative integer; to the second set a polynomial in one variable:
\begin{eqnarray*}
e_A(i_1,\ldots,i_{b}) &=& \sum_{i=1}^{i_{b}-b} a_{i,j_i}\\
\hci_A(i_1,\dots,i_{b}) &= &
\HP^{(\textup{g}_A(i_1,\ldots,i_{b}))}(z) 
\end{eqnarray*}
\begin{proposition}\label{prop:h-poly}
The $h$-polynomial of any degree matrix $A=\left(a_{i,j}\right)\in\mathbb{Z}^{t\times (t+c-1)}$ is
given by
$$\HP^{A}(z) = \sum_{0<i_{1}<\cdots<i_{c-1}<t+c-1} z^{e_A(i_1,\ldots,i_{c-1})}
\cdot \hci_A(i_1,\dots,i_{c-1}). $$ 
\end{proposition}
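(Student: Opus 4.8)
The natural approach is induction on $c$ (equivalently on the number of columns), using the recursive formula from Lemma~\ref{key} applied to a carefully chosen entry. The base case $c=1$ is the complete intersection, where the formula reduces to \eqref{eq:ci}: there are no indices $i_1<\cdots<i_{c-1}$, the exponent $e_A$ is empty (so $z^0=1$), and $\hci_A$ becomes $\HP^{(a_{1,1},\dots,a_{t,t})}$ evaluated on the diagonal entries, which is exactly the complete intersection polynomial $\prod_i(1+\cdots+z^{a_{i,i}-1})$.

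Let me sketch the inductive step. The plan is to apply Lemma~\ref{key} with $(k,l)=(1,1)$ (recall $a_{1,1}>0$ by assumption). This gives
\begin{equation}\label{plan:split}
\HP^{A}(z)=z^{a_{1,1}}\HP^{A^{(1,1)}}(z)+(1+\cdots+z^{a_{1,1}-1})\HP^{A^{(0,1)}}(z).
\end{equation}
Both $A^{(1,1)}$ and $A^{(0,1)}$ have fewer columns, so the inductive hypothesis expresses their $h$-polynomials as sums over increasing index sequences. The core of the argument is then a bijective bookkeeping: I would partition the index sequences $0<i_1<\cdots<i_{c-1}<t+c-1$ appearing in the claimed formula for $\HP^A$ according to whether $i_1=1$ or $i_1>1$. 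The sequences with $i_1>1$ should match the contributions coming from the first term $z^{a_{1,1}}\HP^{A^{(1,1)}}(z)$ (where deleting row $1$ and column $1$ shifts all indices), while the sequences with $i_1=1$ should match the second term $(1+\cdots+z^{a_{1,1}-1})\HP^{A^{(0,1)}}(z)$ (where only column $1$ is deleted). The task is to verify that under these correspondences the exponents $e_A$ and the polynomials $\hci_A$ transform correctly.

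The delicate point — and I expect it to be the main obstacle — is tracking how the data $e_A$, $\textup{g}_A$, and $\hci_A$ behave under the two deletion operations, since the definitions of $e_A$ and $\textup{g}_A$ involve shifted diagonal entries $a_{i_k-(k-1),i_k}$ and a tail sum $\sum_{i=i_{c-1}-(c-2)}^{t}a_{i,i+c-1}$ that are sensitive to exactly which rows and columns survive. For the branch $i_1>1$: deleting row and column $1$ relabels the matrix so that the surviving index sequence $(i_1-1,\dots,i_{c-1}-1)$ for $A^{(1,1)}$ must reproduce, together with the prefactor $z^{a_{1,1}}$, the correct $e_A$ and the correct complete-intersection factor for $A$. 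Here the key check is that $a_{1,1}$ gets absorbed: it is the leftmost term of $\textup{g}_A$, and multiplying $\hci_{A^{(1,1)}}$ by $z^{a_{1,1}}$ together with homogeneity relations of the form $a_{i,j}+a_{i',j'}=a_{i,j'}+a_{i',j}$ should merge $a_{1,1}$ into the first factor of the product, mirroring what already happened in the $t=2$ base case inside the proof of Lemma~\ref{key}. For the branch $i_1=1$: deleting only column $1$ from $A$ leaves a $t\times(t+c-2)$ matrix; the sequence $(i_2,\dots,i_{c-1})$ (reindexed) should feed $\hci_{A^{(0,1)}}$, and the prefactor $(1+\cdots+z^{a_{1,1}-1})$ should supply precisely the missing first factor of the complete-intersection product so that $\hci_A$ is recovered. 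I would verify both matchings by comparing the two sides factor-by-factor, using the homogeneity identities to reconcile entries like $a_{1,1}$ with off-diagonal entries in neighboring rows, and checking that the exponent contributions $e_A$ agree on the nose. Once these two bijections and the corresponding polynomial identities are established, summing them gives \eqref{plan:split} and the induction closes.
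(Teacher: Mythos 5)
Your overall architecture---induction on the number of columns, Lemma \ref{key} applied at $(k,l)=(1,1)$, and matching the two resulting terms against the index sequences with $i_1>1$ and $i_1=1$ respectively---is exactly the paper's proof. However, two of your concrete predictions about how the verification goes are wrong, and as written the plan would stall at both points.

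First, the base case. For $c=1$ the claimed formula does \emph{not} reduce to $\prod_i(1+\cdots+z^{a_{i,i}-1})$: with $b=c-1=0$ the index sequence is empty and the set $\textup{g}_A$ consists of the single tail sum $\sum_{i=1}^{t}a_{i,i}=\tr(A)$, so the right-hand side is the single factor $1+z+\cdots+z^{\tr(A)-1}$. This is as it must be, since for $c=1$ the ideal is principal, generated by $\det M$, a hypersurface of degree $\tr(A)$; the diagonal product you wrote gives the wrong polynomial already for $t=2$, $a_{1,1}=a_{2,2}=2$ (trace factor $1+z+z^2+z^3$ versus $(1+z)^2$). You have likely conflated this with the $t=1$ case---which is the base case the paper actually uses---where the formula genuinely yields the complete intersection polynomial $\HP^{(a_{1,1},\ldots,a_{1,c})}(z)$. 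Note also that an induction on columns via $(k,l)=(1,1)$ degenerates at $t=1$ (deleting the only row), so you need $t=1$ as a base case anyway.

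Second, the $i_1>1$ branch. For those sequences the leftmost element of $\textup{g}_A(i_1,\ldots,i_{c-1})$ is $a_{i_1,i_1}$, not $a_{1,1}$; the entry $a_{1,1}$ enters $\textup{g}_A$ only when $i_1=1$, i.e.\ in the other branch. So there is nothing for $z^{a_{1,1}}$ to merge into, and no homogeneity identities are needed anywhere in this proof. What actually happens is purely formal index bookkeeping: writing $a'_{i,j}=a_{i+1,j+1}$ for the entries of $A^{(1,1)}$, one checks that $\textup{g}_{A^{(1,1)}}(i_1,\ldots,i_{c-1})=\textup{g}_A(i_1+1,\ldots,i_{c-1}+1)$ verbatim (so the factor $\hci$ is unchanged), while $e_{A^{(1,1)}}(i_1,\ldots,i_{c-1})=e_A(i_1+1,\ldots,i_{c-1}+1)-a_{1,1}$, so the prefactor $z^{a_{1,1}}$ is absorbed entirely into the exponent $e_A$, never into a factor of the product. (The homogeneity-based merging you recall from the $t=2$ computation belongs to Case 2 of the proof of Lemma \ref{key} and plays no role here.) Your description of the $i_1=1$ branch is the correct one: $\textup{g}_{A^{(0,1)}}(i_1,\ldots,i_{c-2})=\textup{g}_A(1,i_1+1,\ldots,i_{c-2}+1)\setminus\{a_{1,1}\}$ and $e_{A^{(0,1)}}(i_1,\ldots,i_{c-2})=e_A(1,i_1+1,\ldots,i_{c-2}+1)$, so the prefactor $1+\cdots+z^{a_{1,1}-1}$ supplies exactly the missing complete intersection factor. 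With these two corrections your induction closes exactly as in the paper.
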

\begin{proof}
For $t=1$ and  any $c\ge 1$ we obtain only one summand, and the equality clearly holds. We will use induction on $t$ and on $c$. The recursive formula of Lemma \ref{key} gives us
$$\HP^{A}(z)=z^{a_{1,1}}\HP^{A^{(1,1)}}(z)+(1+\cdots+z^{a_{1,1}-1})\HP^{A^{(0,1)
} } (z).$$
Let us denote  the entries of the matrix $A^{(1,1)}$ by $(a'_{i,j})$  and the entries of $A^{(0,1)}$ by $(a''_{i,j})$. By definition  $a'_{i,j}= a_{i+1,j+1}$ and $a''_{i,j}=a_{i,j+1}$.
By the inductive hypothesis on $t$ we have
$$\HP^{A^{(1,1)}}(z)= \sum_{0<i_{1}<\cdots<i_{c-1}<t+c-2} z^{e_{A^{(1,1)}}(i_1,\ldots,i_{c-1})} \cdot \hci_{A^{(1,1)}}(i_1,\dots,i_{c-1}).$$
For any sequence ${0<i_{1}<\cdots<i_{c-1}<t+c-2}$ we have
$$e_{A^{(1,1)}}(i_1,\ldots,i_{c-1}) = \sum_{i=1}^{i_{c-1}-(c-1)} a'_{i,j_i} = \sum_{i=2}^{i_{c-1}+1-(c-1)} a_{i,j_i} =  e_A(i_1+1,\dots,i_{c-1}+1) -a_{1,1}.$$
It is easy to check that that this implies 
$$\HP^{A^{(1,1)}}(z)= \sum_{1<i_{1}<\cdots<i_{c-1}<t+c-1} z^{e_{A}(i_1,\ldots,i_{c-1}) - a_{1,1}} \cdot \hci_{A}(i_1,\dots,i_{c-1}).$$
By the inductive hypothesis on $c$ we obtain
$$\HP^{A^{(0,1)}}(z)= \sum_{0<i_{1}<\cdots<i_{c-2}<t+c-2} z^{e_{A^{(0,1)}}(i_1,\ldots,i_{c-2})} \cdot \hci_{A^{(0,1)}}(i_1,\dots,i_{c-2}).$$
It is easy to check as above that $ \textup{g}_{A^{(0,1)}}(i_1,\ldots,i_{c-2}) = \textup{g}_A(1,i_1+1,\ldots,i_{c-1}+1) \setminus\{a_{1,1}\}$, and that $e_{A^{(0,1)}}(i_1,\ldots,i_{c-2}) = e_A(1,i_1+1,\dots,i_{c-2}+1)$. This implies that 
$$\HP^{A^{(0,1)}}(z)= \sum_{1<i_{2}<\cdots<i_{c-1}<t+c-1}
z^{e_{A}(1,i_2,\ldots,i_{c-1})} \cdot
\frac{\hci_{A}(1,i_2,\dots,i_{c-1})}{1+\dots +z^{a_{1,1}-1}},$$
and by Lemma \ref{key} we conclude.

\end{proof}


We now focus on the degree and the leading coefficient of the $h$-polynomial. 

\begin{lemma}\label{lem:length}
Let $A=\left(a_{i,j}\right)\in \mathbb{Z}^{t\times (t+c-1)}$ be a degree matrix and
let $h^A=(h_0,\ldots,h_{\tau(h^A)})$. Then:
\begin{enumerate}
 \item[\textup{(i)}]$\tau(h^A)=a_{1,1}+\cdots+a_{1,c}+a_{2,c+1}+\cdots+a_{t,t+c-1}-c$.
 \item[\textup{(ii)}] $h_{\tau(h^A)}=\displaystyle\binom{r+c-2}{c-1}$, where
$r=\max\{i ~:~\,a_{1,1}=\cdots=a_{i,1}\}$.
\end{enumerate}
\end{lemma}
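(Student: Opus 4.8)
The plan is to induct on $t$ and $c$, using Lemma \ref{key} with $(k,l)=(1,1)$ to peel off the first column. The base cases are $t=1$ (a complete intersection, where by \eqref{eq:ci} $\HP^A(z)=\prod_{j=1}^c(1+\cdots+z^{a_{1,j}-1})$ has degree $\sum_j a_{1,j}-c$ and leading coefficient $1=\binom{c-1}{c-1}$) and $c=1$ (a hypersurface, whose $h$-vector is a string of $\tr(A)$ ones, so $\tau(h^A)=\sum_i a_{i,i}-1$ and the leading coefficient is $1=\binom{r-1}{0}$). The engine of the induction is that in
$$\HP^A(z)=z^{a_{1,1}}\HP^{A^{(1,1)}}(z)+(1+\cdots+z^{a_{1,1}-1})\HP^{A^{(0,1)}}(z)$$
both summands have nonnegative coefficients (e.g. by Proposition \ref{prop:h-poly}), so $\tau(h^A)$ is the larger of the two summand degrees, and $h_{\tau(h^A)}$ is the sum of the leading coefficients of those summands that attain this larger degree. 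Here $A^{(1,1)}$ has size $(t-1)\times(t+c-2)$ and codimension $c$, while $A^{(0,1)}$ has size $t\times(t+c-2)$ and codimension $c-1$, so both are covered by the inductive hypothesis.

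For part (i), I would feed the inductive degree formula into each summand. A short bookkeeping computation shows the second summand has degree exactly $S:=a_{1,1}+\cdots+a_{1,c}+a_{2,c+1}+\cdots+a_{t,t+c-1}-c$, while the first summand has degree $S-\sum_{j=2}^{c}(a_{1,j}-a_{2,j})$. Since the entries of $A$ increase from bottom to top, every $a_{1,j}-a_{2,j}\ge 0$, so the first degree is at most $S$; hence $\tau(h^A)=S$, as claimed.

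For part (ii) the crucial point is a homogeneity observation: because $a_{i,j}=p_j-q_i$ for suitable integers $p_1\le\cdots\le p_{t+c-1}$ and $q_1\le\cdots\le q_t$, the difference $a_{i,j}-a_{i',j}$ is independent of $j$. Thus $r=\max\{i:a_{1,1}=\cdots=a_{i,1}\}$ equals $\max\{i:q_1=\cdots=q_i\}$, and is detected identically in every column. I would then split into two cases. If $r=1$, then $a_{1,2}\neq a_{2,2}$, so the first summand has degree strictly below $S$ and only the second contributes; since the first column of $A^{(0,1)}$ is the second column of $A$, its parameter is again $r$, and the inductive hypothesis gives $h_{\tau(h^A)}=\binom{r+(c-1)-2}{(c-1)-1}=\binom{c-1}{c-1}=1=\binom{r+c-2}{c-1}$. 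If $r\ge 2$, then $a_{1,j}=a_{2,j}$ for all $j$, both summands reach degree $S$, and their leading coefficients add. For $A^{(1,1)}$ the analogous parameter is $r''=\max\{i:q_2=\cdots=q_{i+1}\}=r-1$, and for $A^{(0,1)}$ it is again $r$, so
$$h_{\tau(h^A)}=\binom{(r-1)+c-2}{c-1}+\binom{r+(c-1)-2}{(c-1)-1}=\binom{r+c-3}{c-1}+\binom{r+c-3}{c-2}=\binom{r+c-2}{c-1}$$
by Pascal's rule.

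The routine part is the degree bookkeeping of the two summands under the inductive formula. The main obstacle, and the step I would treat most carefully, is tracking how the invariant $r$ transforms under the two reductions $A\rightsquigarrow A^{(1,1)}$ and $A\rightsquigarrow A^{(0,1)}$. The homogeneity observation — that an equality down a column is seen in every column — is exactly what keeps this transformation transparent, and Pascal's identity is what reassembles the two contributions into the single binomial coefficient in the statement.
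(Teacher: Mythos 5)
Your proposal is correct and takes essentially the same route as the paper: induction through the recursion of Lemma \ref{key}, with nonnegativity of the $h$-vectors ruling out cancellation and Pascal's rule reassembling the two leading coefficients. The only differences are cosmetic — you peel the top-left entry $a_{1,1}$ and track the parameter $r$ directly through both reductions (using homogeneity, exactly as you emphasize), whereas the paper peels the bottom-right entry $a_{t,t+c-1}$ and first reduces (ii) to the equal-rows case $r=t$; note also the harmless slip that in your $r=1$ case the inductive binomial is $\binom{1+(c-1)-2}{(c-1)-1}=\binom{c-2}{c-2}$, not $\binom{c-1}{c-1}$, though both equal $1$.
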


\begin{proof} We will prove the claim by induction on $t$ and $c$.
For $t,c=1$, both statements are clear, so  let $t,c>1$. Comparing the degrees of the $h$-polynomials in Lemma \ref{key}, with $(k,l) = (t,t+c-1)$ we obtain
\begin{equation}\label{eq:deg}
\tau(h^A)=\max\{\tau(h^{A^{(t,t+c-1)}})+a_{t,t+c-1},\tau(h^{A^{(0,t+c-1)
}})+a_{t,t+c-1}-1\}
\end{equation}
and (i) follows by induction.

From \eqref{eq:deg} and (i) we deduce that, if  $a_{t,t+c-2} < a_{t-1,t+c-1}$, then the leading coefficients of $\HP^A(z)$ and $\HP^{A^{(t,t+c-1)}}(z)$ are equal.
Thus it is enough to prove the second statement for matrices with equal rows (i.e. with $r=t$).
If we denote by $A' = A^{(t,t+c-1)}$ and $A'' = A^{(0,t+c-1)}$, and apply  Lemma \ref{key} for  $(k,l) = (t,t+c-1)$ we obtain
$$h_{\tau\left(h^A\right)}=h^{A'}_{\tau\left(h^{A'}\right)}+h^{A''}_{\tau\left(h^{A''}\right)}
=\displaystyle\binom{t+c-3}{c-1}+\binom{t+c-3}{c-2}=\binom{t+c-2}{c-1}$$

\end{proof}

From now on, $r$ will denote the \emph{number of maximal equal rows} in a degree matrix. That is $$r=\max\{i ~:~\,a_{1,1}=\cdots=a_{i,1}\}$$

\begin{remark}\label{rem:compute}
 Let $A\in \mathbb{Z}^{t\times (t+c-1)}$ be a degree matrix,
and let $h^A=(h_0,\ldots,h_s)$. We denote by $h' = (h'_0,\dots,h'_{s'})$ the $h$-vector of $A^{(t,t+c-1)}$ and by $h'' = (h''_0,\dots,h''_{s''})$ the
vector given by
$$h''_i=\sum_{k=0}^{a_{t,t+c-1 }-1 } h_{i-k}^{A^{(0,t+c-1)}},$$
where $h_{i-k}^{A^{(0,t+c-1)}} = 0$ if $i<k$.
Lemma \ref{key} states that $h^A$ is computed by component-wise addition: 
$$
\begin{array}{lllllllllll}
 0 & \ldots & 0 & h'_{0} & \ldots & h'_{s'-a_{1,1}+a_{t,1}} & h'_{s'-a_{1,1}+a_{t,1}+1} & \ldots & h'_{s'}& +\\
\rule[-2ex]{0pt}{5ex} h''_{0} & \ldots & h''_{a_{t,t+c-1}-1}& h''_{a_{t,t+c-1}} & \ldots &  h''_{\tau(h'')} &0 &  \ldots& 0&\\
\hline \rule{0pt}{3ex}  h_{0}^{A} & \ldots & h^A_{a_{t,t+c-1}-1} & h^A_{a_{t,t+c-1}} & \ldots &   h_{s-a_{1,1}+a_{t,1}}^{A} &h_{s-a_{1,1}+a_{t,1}+1}^{A}&\ldots & h_{s}^{A}&
\end{array}
$$
By Lemma \ref{lem:length} we have  $s'-s''=a_{1,1}-a_{t,1}$. 
In particular, as $a_{1,1}=a_{r,1}>a_{r+1,1}\ge  \dots \ge a_{t,1}$, the last $a_{1,1}-a_{r+1,1}$ entries of $h^A$ are equal to the last $a_{1,1}-a_{r+1,1}$ entries of $h^{\bar{A}}$, where $\bar{A}$ is the $r\times(r+c-1)$ upper-left block of $A$.
%
\end{remark}

The following proposition  describes the last part of the $h$-vector of a
degree matrix with equal rows. By the above remark these values provide lower bounds for the last entries of the $h$-vector of any degree matrix.   In what follows, we use the convention that
$ \binom{a}{b}=0$,  if $b<0$ or $a<b$.

\begin{proposition}\label{prop:lastentries}
 Let $A\in \mathbb{Z}^{r\times (r+c-1)}$ be a degree matrix with equal rows. Denote by $s=\tau(h^A)$ and by
$a_j=a_{l,j},~\forall~ l,j$. For any $i=0,\ldots,a_{r+1}-1$ we have:
\begin{align}
 h^{A}_{s-i}=&\displaystyle{\binom{r+c-2}{c-1}\cdot\binom{c+i-1}{c-1}}+
\notag\\&+\displaystyle{\sum_{\aa=1}^{c-1}(-1)^{\aa}\binom{r-\aa+c-2}{c-1-\aa}
\sum
_{1\leq j_1<\cdots<j_{\aa}\leq
r}\binom{c+i-1-a_{j_1}-\cdots-a_{j_{\aa}}}{c-1}}.\notag
\end{align}

\end{proposition}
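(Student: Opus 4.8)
The plan is to extract the top coefficients of $\HP^A(z)$ directly from the closed formula of Proposition \ref{prop:h-poly}. Write $s=\tau(h^A)$ and, for each sequence $\mathbf{i}=(i_1,\dots,i_{c-1})$ with $0<i_1<\cdots<i_{c-1}<r+c-1$, abbreviate $e(\mathbf{i})=e_A(\mathbf{i})$, $\hci(\mathbf{i})=\hci_A(\mathbf{i})$, and $\Sigma(\mathbf{i})=\sum_{j=i_{c-1}+1}^{r+c-1}a_j$. First I would record two facts about each summand $z^{e(\mathbf{i})}\hci(\mathbf{i})$ of $\HP^A(z)$. Since $A$ has equal rows, $\hci(\mathbf{i})$ is the $h$-polynomial of a complete intersection, hence a product of palindromic factors $1+z+\dots+z^{m-1}$; thus it is itself palindromic of degree $D(\mathbf{i})=\big(\sum_{k}a_{i_k}\big)+\Sigma(\mathbf{i})-c$. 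A short computation with the definition of $e_A$ shows $e(\mathbf{i})+D(\mathbf{i})=\sum_{j=1}^{r+c-1}a_j-c=s$ for every $\mathbf{i}$ (which also reproves Lemma \ref{lem:length}(i)). Consequently, reversing the $h$-polynomial collapses the grading shifts: using $\hci(\mathbf{i})(1/z)=z^{-D(\mathbf{i})}\hci(\mathbf{i})(z)$ one gets
$$\sum_{i\ge 0}h^A_{s-i}\,z^i \;=\; z^{s}\HP^A(1/z)\;=\;\sum_{\mathbf{i}}\hci(\mathbf{i})(z).$$

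Next I would truncate this identity modulo $z^{a_{r+1}}$. Writing $\hci(\mathbf{i})=\frac{1}{(1-z)^{c}}\prod_{k=1}^{c-1}(1-z^{a_{i_k}})\cdot(1-z^{\Sigma(\mathbf{i})})$ and noting that $\Sigma(\mathbf{i})\ge a_{r+c-1}\ge a_{r+1}$ for all $\mathbf{i}$ (the minimum occurs at $i_{c-1}=r+c-2$, since all $a_j>0$), every factor $1-z^{\Sigma(\mathbf{i})}$ is $\equiv 1\pmod{z^{a_{r+1}}}$; likewise, whenever $i_k>r$ one has $a_{i_k}\ge a_{r+1}$, so $1-z^{a_{i_k}}\equiv 1$. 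Hence modulo $z^{a_{r+1}}$ only the indices $i_k\le r$ survive, and grouping the sum over $(c-1)$-subsets $\mathbf{i}\subseteq\{1,\dots,r+c-2\}$ by their intersection $U$ with $\{1,\dots,r\}$ yields
$$\sum_{i\ge 0}h^A_{s-i}\,z^i\;\equiv\;\frac{1}{(1-z)^{c}}\sum_{U\subseteq\{1,\dots,r\}}\binom{c-2}{\,c-1-|U|\,}\prod_{j\in U}(1-z^{a_j})\pmod{z^{a_{r+1}}},$$
the binomial counting the choices of the remaining $c-1-|U|$ indices among the columns $r+1,\dots,r+c-2$.

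Finally I would expand $\prod_{j\in U}(1-z^{a_j})=\sum_{V\subseteq U}(-1)^{|V|}z^{\sum_{j\in V}a_j}$, reorganize the double sum by the subset $V$ (of size $\alpha$), and evaluate the inner coefficient $\sum_{u}\binom{c-2}{c-1-u}\binom{r-\alpha}{u-\alpha}=\binom{r+c-2-\alpha}{c-1-\alpha}$ by Vandermonde's identity. This turns the numerator into $\sum_{\alpha=0}^{c-1}(-1)^{\alpha}\binom{r+c-2-\alpha}{c-1-\alpha}\!\!\sum_{1\le j_1<\cdots<j_\alpha\le r}\!\! z^{a_{j_1}+\cdots+a_{j_\alpha}}$; dividing by $(1-z)^c$ and reading off the coefficient of $z^i$ via $[z^i]\,z^m/(1-z)^c=\binom{c+i-1-m}{c-1}$ produces exactly the claimed expression for $h^A_{s-i}$, valid precisely for $0\le i\le a_{r+1}-1$, the range in which the truncation is exact. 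The case $c=1$ is the principal-ideal case, where $h^A$ is a string of $1$'s and the formula is immediate. I expect the one delicate point to be the reversal identity of the first paragraph: it hinges on recognizing that for equal rows every Eagon--Northcott summand is a palindrome of the same top degree $s$, so that the shifts $z^{e(\mathbf{i})}$ disappear upon reversal, while the bound $\Sigma(\mathbf{i})\ge a_{r+1}$ is what pins down the exact range of validity.
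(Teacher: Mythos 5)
Your argument is correct, and it takes a genuinely different route from the paper's. The paper proves this proposition by double induction on $r$ and $c$: base cases $c=1$ and $r=1$ (a complete intersection computation via \eqref{eq:ci}), then the inductive step feeds the formulas for $h^{A^{(1,1)}}$ and $h^{A^{(0,1)}}$ into the basic-double-link recursion $h^{A}_{s-i}=h'_{s'-i}+\sum_{j=0}^{a_1-1}h''_{s''-(i-j)}$ coming from Lemma \ref{key}, recombining the binomial sums with the telescoping identity \eqref{eq:binom}. You instead read the tail of the $h$-vector off the closed formula of Proposition \ref{prop:h-poly} (which the paper establishes before and independently of this proposition, so there is no circularity). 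Your two structural observations check out: for equal rows each summand $z^{e_A(\mathbf{i})}\hci_A(\mathbf{i})$ is palindromic with $e_A(\mathbf{i})+D(\mathbf{i})=\sum_{j=1}^{r+c-1}a_j-c=s$, since $e_A(\mathbf{i})$ collects $a_j$ over $\{1,\dots,i_{c-1}\}\setminus\mathbf{i}$ while the degrees in $\textup{g}_A(\mathbf{i})$ contribute $\sum_k a_{i_k}+\Sigma(\mathbf{i})-c$, so reversal collapses all shifts; and equal rows force $0<a_1\le\cdots\le a_{r+c-1}$, so every discarded factor $1-z^{a_{i_k}}$ with $i_k>r$ and every $1-z^{\Sigma(\mathbf{i})}$ (with $\Sigma(\mathbf{i})\ge a_{r+c-1}\ge a_{r+1}$) is $\equiv 1 \pmod{z^{a_{r+1}}}$. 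The completion count $\binom{c-2}{c-1-|U|}$, the inclusion--exclusion over $V\subseteq U$, the Vandermonde evaluation $\sum_u\binom{c-2}{c-1-u}\binom{r-\alpha}{u-\alpha}=\binom{r+c-2-\alpha}{c-1-\alpha}$, and the coefficient extraction are all correct under the paper's binomial convention (in particular $|U|=0$ is killed by $\binom{c-2}{c-1}=0$, and terms with $a_{j_1}+\cdots+a_{j_\alpha}>i$ vanish on both sides). What your route buys: it explains conceptually why the formula holds exactly in the range $i\le a_{r+1}-1$ (that is precisely the truncation threshold), it avoids the inductive bookkeeping, and the identity $z^s\HP^A(1/z)=\sum_{\mathbf{i}}\hci_A(\mathbf{i})(z)$ is a nice by-product that reproves both parts of Lemma \ref{lem:length} for equal rows, since all leading coefficients equal $1$ and cannot cancel. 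What the paper's route buys: it is self-contained modulo Lemma \ref{key} and does not rely on the closed formula. You correctly flag the hinge: the collapse $e_A(\mathbf{i})+D(\mathbf{i})=s$ genuinely uses equal rows, as for general degree matrices the summands of Proposition \ref{prop:h-poly} have varying top degrees and the reversal trick fails.
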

\begin{proof}We will prove the claim by induction on $r$ and $c$ using
the binomial formula 
\begin{equation}\label{eq:binom}
\sum_{i=0}^{a-1}\binom{d-i}{b}=\binom{d+1}{b+1}-\binom{d-a+1}{b+1}.
\end{equation}
The case $c=1$ corresponds to a hypersurface, and  the claim  clearly holds. When $r=1$,  denote by
$h'=(h'_0,\ldots,h'_{s'})$ the $h$-vector of a complete intersection of type
$(a_2,\ldots,a_c)$. For $i=0,\ldots,a_2-1$, using \eqref{eq:ci}, induction on $c$, and \eqref{eq:binom} we obtain:
\begin{align}
 h_{s-i}&=\displaystyle{\sum_{k=0}^{a_1-1}h'_{s'-(i-k)}}
\notag\\
&=\displaystyle{\sum_{k=0}^{a_1-1}\binom{c-2+i-k}{c-2}}+\displaystyle{\sum_{k=0}
^{a_1-1}\binom{c-2+i-k-a_2}{c-2}}\notag\\
&=\binom{c-1+i}{c-2}-\binom{c-1+i-a_1}{c-1}.\notag
\end{align}

Let now  $r,c>1$. We will write for shortness 
$h^{A^{(1,1)}}= (h'_0,\dots,h'_{s'})$ and $h^{A^{(0,1)}}=(h''_0,\dots,h''_{s''})$,
By Lemma \ref{lem:length} we have $s=s'+a_1$ and $s=s''+(a_1-1)$.  By Remark \ref{rem:recursive}  we have in this notation that, for any $i=0,\ldots,a_{r+1}-1$,
\begin{equation}\label{eq:rec}
h^{A}_{s-i}=h'_{s'-i}+\sum_{j=0}^{a_1-1}h''_{s''-(i-j)}.
\end{equation}
For the following computation we use  induction on $c$ and $r$, the formula \eqref{eq:binom}, and  the correspondence
between the indices in $A,A'$ and $A''$. We  also take  into account that, if
$j_\aa=a_{r+1}$, then for any $i=0,\ldots,a_{r+1}-1$  we have 
$\binom{c+i-1-a_{j_1}-\cdots-a_{j_\aa}}{c-1}=0$.

\begin{eqnarray*}
h'_{s'-i}&=&\binom{r+c-3}{c-1}\binom{c-1+i}{c-1}+\\
&&+\sum_{\aa=1}^{c-1}(-1)^{\aa}\binom{r-\aa+c-3}{c-1-\aa}\sum_{2\leq j_1<\cdots<j_{\aa}\leq r}\binom{c+i-1-a_{j_1}-\cdots-a_{j_{\aa}}}{c-1}.\\
&&\\
\sum_{j=0}^{a_1-1}h''_{s''-(i-j)}&=&\binom{r+c-3}{c-2}\binom { c-1+i } { c-1}-\binom{r+c-3}{c-2}\binom { c-1+i-a_1 } { c-1}+\\
 &&+\sum_{\aa=1}^{c-1}(-1)^{\aa}\binom{r-\aa+c-3}{c-2-\aa}\sum_{2\leq j_1<\cdots<j_{\aa}\leq r}\binom{c+i-1-a_{j_1}-\cdots-a_{j_{\aa}}}{c-1}-\\
&&-\sum_{\aa=1}^{c-2}(-1)^{\aa}\binom{r-\aa+c-3}{c-2-\aa}\sum_{2\leq j_1<\cdots<j_{\aa}\leq r}\binom{c+i-1-a_1-a_{j_1}-\cdots-a_{j_{\aa}}}{c-1}.
 \end{eqnarray*}

Substituting these formulae in \eqref{eq:rec}, grouping the summands, and applying \eqref{eq:binom} we obtain

\begin{eqnarray*}
 h^A_{s-i}&=&\binom{r+c-2}{c-1}\binom{c-1+i}{c-1}\\
&&+\sum_{\aa=1}^{c-1}(-1)^{\aa}\binom{r-\aa+c-2}{c-1-\aa}\sum
_{2\leq j_1<\cdots<j_{\aa}\leq r}\binom{c+i-1-a_{j_1}-\cdots-a_{j_{\aa}}}{c-1}+\\
&&+\sum_{\aa=2}^{c-1}(-1)^{\aa}\binom{r-\aa+c-2}{c-1-\aa}\sum_{2\leq j_1<\cdots<j_{\aa-1}\leq r}\binom{c+i-1-a_1-a_{j_1}-\cdots-a_{j_{\aa-1}}}{c-1}-\\
&&-\binom{r+c-3}{c-2}\binom { c-1+i-a_1 } { c-1},
 \end{eqnarray*}

and the claim follows by straight forward  rewriting of this formula.
\end{proof}
\newpage
\section{Standard determinantal ideals and pure O-sequences}
In this section we prove the main result of this paper (Theorem \ref{thm:main}), which states that if all the rows in a degree matrix are equal, then its $h$-vector is a log-concave pure O-sequence. By Proposition \ref{prop:level} such matrices correspond exactly to level standard determinantal ideals.
We  conjecture the converse of the main theorem to hold (Conjecture \ref{conj:main}). 
Among the support we bring for this statement are the validity for codimension two, for the last entry of the $h$-vector equal to one, and for matrices with all entries positive.

 In codimension one, all $h$-vectors are finite  sequences of 1s, thus pure O-sequences. We will assume throughout this section that the codimension $c$ is greater than two. \\

An  \emph{O-sequence} is a finite vector of integers which is the Hilbert function of some standard graded artinian algebra, that is it satisfies the numerical conditions in Macaulay's theorem \cite{Mac27}. An O-sequence is called \emph{pure} if it is the Hilbert function of a level, monomial artinian algebra. In general, a Cohen-Macaulay, standard graded quotient of the polynomial ring $S$ is called \emph{level} if the last $S$-module in its minimal free resolution is of the form $S(-s)^a$, where $s$ and $a$ are positive integers. 

 Pure O-sequences have also a purely combinatorial interpretation as follows.
We will write $\Mon(\mathbf{y})$ for the collection of all monomials in $\mathbf{y} = (y_1,\dots,y_m)$. An \emph{order ideal} of $\Mon(\mathbf{y})$ is a finite subset $\G\subset \Mon(S)$ closed under division, i.e. if $N|M\in \G$,  then $N\in \G$. 
An order ideal is called \emph{pure} if all maximal monomials  have the same degree. We write 
$$\G=\langle M \in \G ~:~M \textup{~is maximal with respect to division}\rangle.$$
The  \emph{$f$-vector} of an order ideal $\G$ is $f(\G)=(f_0,\ldots,f_s)$, 
where $f_i(\G)=|\{M\in \G|\deg(M)=i\}|$. It is not difficult to check that a vector $h= (h_0,\dots,h_s)$ is a pure O-sequence if and only if it is the $f$-vector of some pure order ideal. Recall that the vector $h$ is called \emph{log-concave}, if 
$$h_i^2\geq h_{i-1}\cdot h_{i+1},~\forall~ i=1,\ldots,s-1.$$ 
 
We use matroids to obtain a connection between $h$-vectors of standard determinantal ideal and pure O-sequences. 
A  \emph{simplicial complex} on the vertex set $[n] = \{1,\dots,n\}$ is a collection of subsets $\D\subseteq2^{[n]}$ closed under taking subsets, i.e. if $G\subseteq F \in \D$, then $G\in\D$. A \emph{matroid} is a simplicial complex with the extra property that  if $F,G\in \D$,  with $|G|<|F|$, then there exists $i\in F$ such that $G\cup \{i\}\in \D$.
For  $v\in \D$, the \emph{link of $v$ in $\D$}, respectively the \emph{deletion of $v$ in $\D$} are the  simplicial complexes
\begin{eqnarray*}
\link_{\D}(v)&=&\{F\in \D~;~v\notin F\,\text{and}\,F\cup \{v\}\in
\D\},\\
\D\setminus v&=&\{F\in \D~;~v\notin F\}.
\end{eqnarray*}
When $\D$ is a matroid, then both $\link_{\D}(v)$ and $\D\setminus v$ are matroids as well. 
The maximal faces under inclusion are called \emph{facets}; they determine the simplicial complex. We denote the set of facets of $\D$ by $\mathcal{F}(\D)$.
A vertex $v\in \D$ with $v\in F $ for any $F\in \mathcal{F}(\D)$  is
called a \emph{cone point} of $\D$.

For any simplicial complex $\D$, the  \emph{cover ideal} is the square-free monomial ideal of the polynomial ring $S=\Bbbk[x_1,\dots,x_n]$ defined as
 $$J(\D)=\bigcap_{F\in\D}(x_i ~:~\,i\in F) .$$
We will denote by  $h^\D$  the $h$-vector of $S/J(\D)$.
According to \cite[Remark 1.7]{CV3}, if $\D$ is a matroid and $v\in \D$ not a cone point, then 
\begin{equation}\label{eq:recursive for matroids}
 h^{\D}_i=h^{\D\setminus v}_{i-1}+h^{\link_{\D}(v)}_i.
\end{equation}

\begin{remark}\label{rem:matroid h}
For any simplicial complex $\D$, the \emph{dual} (or complement) of $\D$  is the simplicial  complex $\D^\opc$ with 
$$\mathcal{F}(\D^\opc) = \left\{ [n]\setminus F~:~ F\in\mathcal{F}(\D)\right\}.$$ 
A classical matroid theory result states that $\D$ is a matroid if and only if $\D^\opc$ is a matroid \cite{Ox11}.

In common matroid terminology, the vector $h^\D$  we defined above is the \lq\lq classical\rq\rq~ $h$-vector of the dual matroid. This choice was made in order to keep a coherent notation with the main result of \cite{CV3}, which we use to prove the following theorem.
\end{remark}

\begin{theorem}\label{thm:main} Let $X\subseteq\mathbb{P}^{n}$ be a codimension c standard
determinantal scheme. If the corresponding degree matrix $A$  has equal rows, then
$h^A$ is a log-concave pure O-sequence. 

In particular,  if  $A\in \mathbb{Z}^{t\times(t+c-1)}$, with rows  $(a_1,\dots,a_{t+c-1})$,
then  $h^A=f(\G)$ where $\G$ is the order ideal of $\Mon(y_1,\ldots,y_c)$ given by
$$\G=\left\langle 
y_{1}^{(\sum_{i=1}^{l_{1}-1}a_{i})-1}\cdot
y_{2}^{(\sum_{i=l_{1}}^{l_{2}-1}a_{i})-1}\cdots
y_{c}^{(\sum_{i=l_{c-1}}^{t+c-1}a_{i})-1}~:~\forall1=l_{0}<l_{1}<\cdots<l_{c-1}
\leq t+c-1 \right\rangle. $$
\end{theorem}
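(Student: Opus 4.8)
The plan is to prove the two assertions in tandem: that $h^A$ equals the $f$-vector $f(\G)$ of the specified pure order ideal, and that this $f$-vector is log-concave. Since $\G$ is by construction a pure order ideal (its generators all have the same degree $\tau(h^A)$, by Lemma \ref{lem:length}(i) with all rows equal), establishing $h^A = f(\G)$ immediately shows $h^A$ is a pure O-sequence; log-concavity will come from the matroid machinery. I would structure the argument around the recursive formula \eqref{eq:recursive for matroids} for matroid $h$-vectors and the recursion of Lemma \ref{key}, showing the two recursions coincide.

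First I would exhibit the matroid. The natural candidate is the matroid $\D$ whose cover-ideal $h$-vector realizes $h^A$; concretely, I would build a representable matroid from the degree data so that $h^\D = h^A$, using that $\Bbbk$ is infinite to get a representation over $\Bbbk$. The key numerical bookkeeping is that a degree matrix with equal rows $(a_1,\dots,a_{t+c-1})$ should correspond to a matroid obtained by replacing each \emph{column} by a parallel class of $a_j$ elements (a \lq\lq principal\rq\rq~matroid of rank $c-1$ built from the partial sums). Then $h^\D$ is a pure O-sequence by the main result of \cite{CV3} (matroid $h$-vectors of this form are $f$-vectors of pure order ideals), and log-concavity follows from Huh's theorem \cite{Hu12} applied to the representable matroid.

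The core verification is then the identification of the order ideal and the matching of recursions. I would use deletion/link in the matroid: deleting a vertex $v$ and taking $\link_\D(v)$ correspond, under the dictionary, to the two submatrices $A^{(t,t+c-1)}$ and $A^{(0,t+c-1)}$ appearing in Lemma \ref{key}. Comparing \eqref{eq:recursive for matroids},
\begin{equation*}
h^{\D}_i=h^{\D\setminus v}_{i-1}+h^{\link_{\D}(v)}_i,
\end{equation*}
with the recursion of Remark \ref{rem:recursive}, and inducting on $t$ and $c$, forces $h^A = h^\D = f(\G)$. The explicit monomial description of $\G$ is read off from the generators: a chain $1 = l_0 < l_1 < \cdots < l_{c-1} \le t+c-1$ selects which consecutive blocks of columns are grouped, and the exponent of $y_k$ records the corresponding partial sum of the $a_i$ minus one, matching the complete-intersection factors $\hci_A$ of Proposition \ref{prop:h-poly}.

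The main obstacle I expect is constructing the correct representable matroid and verifying that its dual $h$-vector (in the sense of Remark \ref{rem:matroid h}) really equals $h^A$ on the nose, rather than merely having the same length or endpoints. The safest route is to avoid guessing the matroid globally and instead \emph{define} it recursively so that the deletion and link operations are built to mirror the column/row deletions of Lemma \ref{key}; then the equality $h^A = h^\D$ is an induction whose base cases are the complete intersection ($t=1$) handled by \eqref{eq:ci} and the hypersurface ($c=1$). Once $h^A = h^\D$ is secured, purity and log-concavity are immediate consequences of \cite{CV3} and \cite{Hu12} respectively, and the explicit generators of $\G$ follow by tracking the partial sums through the recursion.
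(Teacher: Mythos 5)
Your proposal follows essentially the same route as the paper's proof: the matroid you describe (one parallel class of $a_j$ generic vectors in $\mathbb{F}^c$ per column) is exactly the paper's $\D_0(c,m,(a_1,\ldots,a_m))$, and the paper likewise proves $h^A=h^{\D_0}$ by matching the deletion/link recursion \eqref{eq:recursive for matroids} against Lemma \ref{key} by induction on $t$ and $c$, then invokes \cite[Theorem 3.5]{CV3} for purity and \cite[Theorem 3]{Hu12} (via dual representability, Remark \ref{rem:matroid h}) for log-concavity. The only slip is cosmetic: the matroid has rank $c$ (facets of cardinality $c$), not $c-1$.
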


\begin{proof}
We will write  $m=t+c-1$ for short. For $i=1,\ldots,m$, let $A_{i}$
be a set of vertices of cardinality $a_i$. As in \cite{CV3}, we define the simplicial complex $\D_0(c,m,(a_{1},\ldots,a_{m}))$ on $\sqcup_{i=1}^mA_i$ as
$$\langle \{v_{i_{1}},\ldots,v_{i_{c}}\} ~:~1\leq i_{1}<\cdots<i_{c}\leq m \textup{~and~} v_{i_j}\in
A_{i_j}\textup{~for every~} i_j\rangle.$$
One can easily check that $\D_0(c,m,(a_{1},\ldots,a_{m}))$ is a matroid. We will
show by induction on $c$
and $t$ that the $h$-vectors $h^A$ and
$h^{\D_0(c,m,(a_{1},\ldots,a_{m}))}$  coincide. For $t=1$ or $c=1$ the claim  is straight forward. 
Let $t,c>1$. By Lemma \ref{key} applied for $a_m$ we have
$$h=h^{A^{(t,m)}}_{i-a_{m}} + \sum_{k=0}^{a_{m}-1}h^{A^{(0,m)}}_{i-k}.$$
On the other hand, applying $a_m$ times the formula \eqref{eq:recursive for matroids}, once for every vertex in $A_m$, we obtain
 $$h^{\D_0(c,m,(a_{1},\ldots,a_{m}))}_i=h^{\D_0(c,m-1,(a_{1},\ldots,a_{m-1}))}_{i-a_m}+\sum_{k=0}^{ a_m-1}h_{i-k}^{\D_0(c-1,m-1,(a_{1},\ldots,a_{m-1}))},$$
and we conclude by induction. In particular, by \cite[Theorem 3.5]{CV3}, $h^A$ is the pure O-sequence given by $\G$  as claimed.

Furthermore $\D_0(c,m,(a_{1},\ldots,a_{m}))$ is representable over any infinite  field $\mathbb{F}$ of characteristic zero. A presentation matrix $D$ can be constructed as follows: choose $m$ generic vectors $w_{1},\ldots,w_{m}\in\mathbb{F}^{c}$, that is any $c$ of them are linearly independent. Let the first $a_1$ columns of $D$ be $w_1$, the next $a_2$ be equal to $w_2$ and so on. Clearly $D$ represents the matroid $\D_0(c,m,(a_{1},\ldots,a_{m}))$.  
 A matroid is representable over $\mathbb{F}$ if and only if its dual is representable over $\mathbb{F}$ (see  \cite[Corollary 2.2.9]{Ox11}).  So, by Remark \ref{rem:matroid h}, we may use J. Huh's result on $h$-vectors of  matroids which are representable over fields of characteristic zero (\cite[Theorem 3]{Hu12}) and conclude  that  $h^A$ is log-concave. 
\end{proof}
The result  which we used to conclude (\cite[Theorem 3]{Hu12})  has been in the meantime generalized by Huh and E. Katz in \cite{HK12}. However,  we find the weaker version  which we cite in the proof better adapted to our setting. \\

The next result shows that, not only is the $h$-vector of a degree matrix with equal rows the Hilbert function of some level algebra, but that the standard determinantal schemes having such a degree matrix are exactly the  level ones.

\begin{proposition}\label{prop:level} Let $X\subseteq\mathbb{P}^{n}$
be a standard determinantal scheme of codimension c, with degree matrix
$A=\left(a_{i,j}\right)\in\mathbb{Z}^{t\times (t+c-1)}$. Then $X$ is level
if and only if $A$ has equal rows.
\end{proposition}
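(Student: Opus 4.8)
The plan is to use the Eagon-Northcott complex, which gives the minimal graded free resolution of $S/I_{\max}(M)$ and is determined entirely by the degree matrix $A$. Since $S/I$ is Cohen-Macaulay of codimension $c$, the resolution has length $c$, and $X$ being level means precisely that the last free module $F_c$ in this resolution is of the form $S(-s)^a$ for a single twist $s$. So the whole problem reduces to reading off the twists appearing in the top module of the Eagon-Northcott complex and checking when they all coincide.

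First I would recall the explicit form of the $c$-th term of the Eagon-Northcott complex resolving the ideal of maximal minors of $M\colon \bigoplus_{i=1}^t S(b_i)\to \bigoplus_{j=1}^{t+c-1} S(a_j)$. Writing $F = \bigoplus_j S(a_j)$ and $G = \bigoplus_i S(b_i)$, the terms are built from $\bigwedge^{t+k} F \otimes S_{k-1}(G)^* \otimes (\bigwedge^t G)^*$ (up to the standard indexing conventions), and the last term $F_c$ corresponds to the maximal exterior power $\bigwedge^{t+c-1} F$ tensored with the appropriate symmetric power of $G^*$. The summands of $F_c$ are thus indexed by the choice of all $t+c-1$ columns together with a choice of $c-1$ rows (with repetition, coming from the symmetric power $S_{c-1}(G^*)$), and each summand is a free module $S(-s_{\mathbf{w}})$ whose twist is the negative of $\sum_j a_j + \sum$ of the chosen $b_i$'s. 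I would compute these twists $s_{\mathbf{w}}$ explicitly in terms of the $a_j$ and $b_i$, equivalently in terms of the entries $a_{i,j} = b_j - a_i$ of $A$.

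Next I would translate the condition that all these twists are equal into a condition on $A$. Since the summands of $S_{c-1}(G^*)$ range over all multi-indices $1\le i_1\le\cdots\le i_{c-1}\le t$ of the $b_i$'s, the set of twists is $\{\sum_j a_j + b_{i_1}+\cdots+b_{i_{c-1}}\}$ over all such multi-indices (the $\sum_j a_j$ part being fixed once all columns are used in the top term). All of these coincide if and only if all the $b_i$ are equal, because the extreme multi-indices $(1,\dots,1)$ and $(t,\dots,t)$ give twists differing by $(c-1)(b_t - b_1)$, and more generally equality forces $b_1=\cdots=b_t$. Finally, since $a_{i,j}=b_j-a_i$, the rows of $A$ are equal (row $i$ is $(b_1-a_i,\dots,b_{t+c-1}-a_i)$, and two rows $i,i'$ agree exactly when $a_i=a_{i'}$) — wait, I must be careful here, so let me redo the bookkeeping: the rows of $A$ being equal means $a_{i,j}$ is independent of $i$, i.e. $b_j - a_i$ is independent of $i$, which forces all $a_i$ equal; meanwhile level-ness forces the relevant $b_i$'s to coincide. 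I would reconcile which index set ($a_i$ vs $b_i$) controls the socle twists by pinning down the exact Eagon-Northcott indexing, and then match that to \lq\lq equal rows\rq\rq{} of $A$.

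The main obstacle I expect is precisely getting the Eagon-Northcott bookkeeping right: correctly identifying which of the two graded free modules ($F$ or $G$) contributes the symmetric-power factor in the top homological degree, and hence whether level-ness is governed by the $a_j$ or the $b_i$, so that the final equivalence lands on \emph{rows} of $A$ rather than columns. Once the twists in $F_c$ are written explicitly, the equivalence \lq\lq all twists equal $\iff$ the controlling degree sequence is constant $\iff$ $A$ has equal rows\rq\rq{} is a short combinatorial argument. I would also double-check minimality of the resolution (so that no cancellation merges or removes socle generators), which holds because $M$ has no invertible entries, guaranteeing that $F_c$ genuinely records the socle degrees.
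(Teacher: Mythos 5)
Your proposal is correct and takes essentially the same route as the paper: it also reads off the twists of the last Eagon--Northcott term $F_c=\bigwedge^{t+c-1}G^{*}\otimes S_{c-1}(F)\otimes\bigwedge^{t}F$, observes that the symmetric-power factor sits on the rank-$t$ module so the twists vary exactly over multisets of $c-1$ of the row twists, and concludes (using $c\ge 2$, as the extreme multi-indices differ by $(c-1)(b_t-b_1)$) that all twists coincide if and only if the row twists are constant, i.e.\ $A$ has equal rows. The indexing worry you flag is harmless --- the paper itself swaps conventions between the Preliminaries ($a_{i,j}=b_j-a_i$) and the proof of this proposition ($a_{i,j}=a_j-b_i$) --- since under either convention the symmetric power lies on the rank-$t$ module, whose twists index the \emph{rows} of $A$, and your observation that minimality of the resolution follows from the absence of invertible entries is exactly what the paper implicitly uses.
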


\begin{proof} 
Let $M=\left(f_{i,j}\right)$ be the homogeneous matrix whose maximal minors generate the defining ideal $I_X$ of $X$. Let  $ a_{j}-b_{i} = a_{i,j}=\deg f_{i,j}$, so $M$ defines a graded homomorphism of degree zero
$$\varphi:F=\bigoplus_{i=1}^tS(b_i)\longrightarrow\bigoplus_{j=1}^{t+c-1}S(a_j)=G.$$
The minimal free resolution of $S/I_X$ is given by the Eagon-Northcott complex with respect to $\varphi$ (see \cite{BV88,MiR08}). Therefore the last free module in it is of the form 
$$F_c= \bigwedge^{t+c-1}G^{*}\otimes S_{c-1}(F)\otimes\bigwedge^{t}F,$$ 
where 
$$
\begin{array}{rclcrcl}
G^{*}&=&{\displaystyle\,\bigoplus_{\phantom{_1\leq k_1\leq}j=1\phantom{\dots k_{c-1}}}^{t+c-1}}S(-a_j)&\quad&
\bigwedge^{t+c-1}G^{*}&=&S\left(-\sum_{j=1}^{t+c-1}a_j\right)\\ 
\rule{0pt}{4ex}S_{c-1}(F)&=&{\displaystyle\bigoplus_{1\leq k_1\leq\cdots\leq k_{c-1}\leq t}}S\left(\sum_{j=1}^{c-1}b_{k_j}\right)&&
\bigwedge^{t}F&=&S\left(\sum_{i=1}^{t}b_i\right)\\
\end{array}
$$

\noindent We can rewrite the shifts in $F_c$ in terms of the entries of $A$ as follows

\begin{align}F_{c}&=\underset{1\leq k_{1}\leq\cdots\leq k_{c-1}\leq
t}{\bigoplus}S(-a_1-\cdots-a_{t+c-1}+b_1+\cdots+b_t+b_{k_1}+\cdots +b_{k_{c-1}}) \notag\\
&=\underset{1\leq k_{1}\leq\cdots\leq k_{c-1}\leq
t}{\bigoplus}S(-a_{k_{1},1}-\cdots-a_{k_{c-1},c-1}-a_{1,c}-\cdots-a_{t,t+c-1})\notag
\end{align}
The scheme $X$ is level if and only if $F_c=S^b(-d)$. In particular the shifts corresponding to the summation indices $(1,t,\ldots,t),\ldots,(t,t,\ldots,t)$  are all equal, that is
$$a_{1,1}+a_{t,2}+\cdots+a_{t,c-1}=\cdots=a_{t,1}+a_{t,2}+\cdots+a_{t,c-1}.$$
This implies that  $a_{1,1}=\cdots=a_{t,1}$, which is equivalent to  the rows of $A$ being  equal. \end{proof}
We believe that, just as in Proposition \ref{prop:level}, an equivalence holds also in Theorem \ref{thm:main}.
\begin{conjecture}\label{conj:main}
If  $A$ is a degree matrix without zeros, then
 $h^A$ is a pure O-sequence if and only if $A$ has equal rows.
\end{conjecture}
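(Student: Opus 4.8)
The \emph{if} direction is already in hand: by Theorem \ref{thm:main}, together with Proposition \ref{prop:level}, equal rows force $h^A$ to be a (log-concave) pure O-sequence. So the plan is to prove the contrapositive of the remaining implication: if $A$ has no zeros but its rows are not all equal --- equivalently $r<t$, in the notation of Lemma \ref{lem:length} --- then $h^A$ is \emph{not} a pure O-sequence. By Remark \ref{rem:recursive} the restriction to zero-free matrices is harmless.

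The only general obstruction to purity at our disposal is Hibi's inequality $h_i \le h_{s-i}$ for $i \le \lfloor s/2 \rfloor$ \cite{Hi89}, where $s=\tau(h^A)$. The plan is therefore to locate an index $i$ with $h^A_i > h^A_{s-i}$. Both ends of the vector are accessible from the formulae already developed: by Remark \ref{rem:compute} the top $a_{1,1}-a_{r+1,1}$ entries of $h^A$ agree with those of the equal-row upper-left block $\bar A$, whose tail is given in closed form by Proposition \ref{prop:lastentries}, while the bottom entries $h^A_0,h^A_1,\dots$ can be read off from Proposition \ref{prop:h-poly} (or computed directly in low degree). I would feed these explicit head and tail values into the Hibi inequalities and extract a contradiction.

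Two regimes should be separated. When $r=1$ --- which covers the case where the second row lies strictly below the first --- Lemma \ref{lem:length} gives $h^A_s=\binom{c-1}{c-1}=1$, so a pure O-sequence would have a single maximal monomial and hence be symmetric, equal to the coefficient sequence of some $\prod_j(1+z+\cdots+z^{e_j})$. In this regime it suffices to exhibit one asymmetry $h^A_i \neq h^A_{s-i}$, which the formulae make checkable; this is the mechanism I would use for codimension two (where the whole $h$-vector is computable) and for the ``second row smaller'' case. When $r \ge 2$ one has $h^A_s=\binom{r+c-2}{c-1}>1$ and purity no longer forces symmetry, so a genuine Hibi violation must be produced by comparing head against tail, and it is here that I would invoke positivity of the entries to pin down the comparison.

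The hard part --- and the reason the statement remains conjectural --- is the general case $2 \le r < t$. There is no forced symmetry and, as noted in the introduction, no criterion for non-purity beyond Hibi; yet it is not even clear that a Hibi violation always occurs, and even granting that it does, comparing $h^A_i$ with $h^A_{s-i}$ for an arbitrary zero-free degree matrix amounts to estimating the alternating binomial sums produced by Proposition \ref{prop:h-poly} and Proposition \ref{prop:lastentries} across all admissible degrees, a combinatorial problem that resists a uniform treatment. Pushing the explicit head/tail comparison through for \emph{every} such matrix, or discovering a new obstruction to purity adapted to determinantal $h$-vectors, is the essential obstacle.
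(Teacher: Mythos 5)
Your overall reading of the situation is correct: the statement is a conjecture, the paper proves only the ``if'' direction (Theorem \ref{thm:main} together with Proposition \ref{prop:level}) plus partial cases of the converse, and your head/tail mechanism via Remark \ref{rem:compute}, Lemma \ref{lem:length} and Proposition \ref{prop:lastentries} is exactly how the paper's Proposition \ref{prop:nohibi} handles the case $a_{2,1}<0$. But two of your concrete mechanisms diverge from, and are strictly weaker than, what the paper actually does. The central gap is your premise that Hibi's inequalities \cite{Hi89} are ``the only general obstruction to purity at our disposal.'' The paper has a second obstruction, Remark \ref{rem:bound}: counting divisors of the maximal monomials of a pure order ideal gives $h_{s-i}\le h_s\cdot\binom{c-1+i}{c-1}$, and it is this bound --- not Hibi --- that drives Proposition \ref{prop:ar+1,1>0} for matrices with $a_{r+1,1}>0$ (in particular, all entries positive). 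A Hibi-only strategy provably cannot recover that case: the paper's example matrices $A$ and $A^{(4,5)}$ (with $r=2$) have flawless $h$-vectors that are nevertheless not pure, so for such matrices no index $i_0$ with $h_{i_0}>h_{s-i_0}$ exists, and non-purity must be detected by comparing the tail entry $h_{s-i}$ against $h_s\cdot\binom{c-1+i}{c-1}$ rather than against $h_i$. Your plan for the regime $r\ge 2$ (``a genuine Hibi violation must be produced\dots invoking positivity'') would therefore stall precisely where the paper's Remark \ref{rem:bound} succeeds.

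The second divergence is codimension two. The paper's Proposition \ref{prop:codim2} computes nothing: it invokes the Hilbert--Burch theorem, by which in codimension two the Hilbert function of the artinian reduction $B$ determines its minimal free resolution uniquely; hence if $h^A$ equals the Hilbert function of a monomial artinian \emph{level} algebra, $B$ itself must be level, and Proposition \ref{prop:level} forces equal rows. This structural argument works uniformly in $r$. Your proposed mechanism for codimension two --- purity with $h_s=1$ forces the $h$-vector to be the symmetric $f$-vector of the divisors of a single monomial, so one asymmetry suffices --- is correct as far as it goes, but by Lemma \ref{lem:length} one has $h_s=r$ in codimension two, so your argument only bites when $r=1$ and leaves $2\le r<t$ unaddressed even in this, the one case the paper settles completely. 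Your assessment of the genuinely open territory (general $2\le r<t$ with mixed-sign degrees) does match the paper's own: its examples $A$, $B$ show that sign patterns alone do not control flawlessness, which is exactly why the obstruction landscape there is subtler than either Hibi or the counting bound.
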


The last part of this paper is dedicated to bringing evidence in support of this statement. We first prove that  that Conjecture \ref{conj:main} holds in codimension two.
\begin{proposition}\label{prop:codim2}
If $X\subseteq\mathbb{P}^{n}$ be a codimension 2 standard
determinantal scheme, whose degree matrix
$A\in\mathbb{Z}^{t\times t+1}$
has no zeros, then  $h^{A}$ is a pure O-sequence if and only if $A$ has equal
rows.
\end{proposition}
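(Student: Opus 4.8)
By Theorem \ref{thm:main} the rows being equal already forces $h^A$ to be a (log-concave) pure O-sequence, so the plan is to prove only the converse: if $A$ has no zeros and its rows are not all equal, then $h^A$ is not pure. The engine will be the description of pure O-sequences in two variables. A pure order ideal in $\Mon(y_1,y_2)$ is a union of $k=h_s$ rectangles $\{y_1^iy_2^j: i<A_l,\ j<B_l\}$ whose corners $y_1^{A_l-1}y_2^{B_l-1}$ all sit in degree $s$ (so $A_l+B_l=s+2$); ordering them by $A_1<\dots<A_k$ the cover is a staircase, so inclusion–exclusion telescopes to $\HP(z)=\sum_l\big(1+\dots+z^{A_l-1}\big)\big(1+\dots+z^{B_l-1}\big)-\sum_{l=1}^{k-1}\big(1+\dots+z^{A_l-1}\big)\big(1+\dots+z^{B_{l+1}-1}\big)$, whence
\[
(1-z)^2\HP(z)=1+h_s\,z^{s+2}-\Big(z^{B_1}+z^{A_k}+\sum_{l=1}^{k-1}z^{A_l+B_{l+1}}\Big).
\]
The subtracted polynomial has nonnegative coefficients supported in degrees $1,\dots,s+1$; equivalently, for a pure O-sequence of socle degree $s$ the coefficient of $z^d$ in $(1-z)^2\HP(z)$ is $\le 0$ for every $1\le d\le s+1$. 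Hence it suffices to exhibit one degree $d^\ast\in\{1,\dots,s+1\}$ at which this coefficient is strictly positive for $h^A$.

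Next I would feed in the explicit formula. Specializing Proposition \ref{prop:h-poly} to $c=2$ and writing $s=\tau(h^A)$ gives
\[
\HP^A(z)=\sum_{p=1}^{t}z^{e_p}\big(1+\dots+z^{a_{p,p}-1}\big)\big(1+\dots+z^{\sigma_p-1}\big),\qquad e_p=\sum_{i<p}a_{i,i},\quad \sigma_p=\sum_{i\ge p}a_{i,i+1}.
\]
Multiplying by $(1-z)^2$ and using $(1-z)(1+\dots+z^{m-1})=1-z^m$ yields $(1-z)^2\HP^A(z)=\sum_{p=1}^t z^{e_p}(1-z^{a_{p,p}})(1-z^{\sigma_p})$, so the $p$-th summand places $+1$ at the two exponents $e_p$ and $e_p+a_{p,p}+\sigma_p$ and $-1$ at $e_p+a_{p,p}$ and $e_p+\sigma_p$. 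Normalizing the row shifts so that $a_1=0$ (so that $s+2=\sum_j b_j-\sum_i a_i$), short computations give $e_p+a_{p,p}+\sigma_p=s+2-a_p$ and $e_p+\sigma_p=s+2-b_p$. Since the rows are not all equal, $r<t$ and $a_{r+1}>a_1=0$, and I take the witness degree $d^\ast:=s+2-a_{r+1}\in\{1,\dots,s+1\}$.

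Finally I would compute the coefficient of $z^{d^\ast}$ in $(1-z)^2\HP^A(z)$ and show it equals $\#\{p:a_p=a_{r+1}\}\ge 1$. The $+1$'s reaching $d^\ast$ are exactly the top corners $e_p+a_{p,p}+\sigma_p=s+2-a_p=d^\ast$, i.e.\ those $p$ with $a_p=a_{r+1}$. No other corner meets $d^\ast$: the bottom corners satisfy $e_p\le\sum_{i<t}a_{i,i}<d^\ast$; the corners $e_p+a_{p,p}=\sum_{i\le p}a_{i,i}\le\sum_{i\le t}a_{i,i}=d^\ast-(b_{t+1}-a_{r+1})<d^\ast$, using $b_{t+1}\ge b_{r+1}>a_{r+1}$ (valid because $a_{r+1,r+1}>0$); and $e_p+\sigma_p=d^\ast$ would force $b_p=a_{r+1}$, that is $a_{r+1,p}=0$. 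Thus the coefficient is $+\#\{p:a_p=a_{r+1}\}>0$, so $h^A$ violates the concavity criterion and is not pure. The crux is precisely this cancellation analysis at $d^\ast$: the first row whose shift $a_{r+1}$ exceeds the top ones pushes a top corner of the $h$-polynomial into the interior, and one must verify it is not annihilated by a $-1$ corner — which is guaranteed exactly by the hypothesis that $A$ has no zeros (it rules out the corner $e_p+\sigma_p=d^\ast$) together with the positivity of the diagonal (it rules out $e_p+a_{p,p}=d^\ast$).
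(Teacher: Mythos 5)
Your proof is correct, but it takes a genuinely different route from the paper's. The paper argues structurally: if $h^A$ is pure, it is the Hilbert function of a monomial artinian \emph{level} algebra in two variables; since $A$ has no zeros, in codimension two the Hilbert--Burch theorem forces the Hilbert function to determine the minimal free resolution of the artinian reduction of $S/I_X$ uniquely, so that reduction is itself level, and Proposition \ref{prop:level} then yields equal rows. You instead prove a numerical obstruction from scratch: the staircase description of pure order ideals in $\Mon(y_1,y_2)$ gives that $(1-z)^2\HP(z)$ has nonpositive coefficients in every degree $1,\dots,s+1$, and the corner analysis of the $c=2$ specialization of Proposition \ref{prop:h-poly} exhibits the witness degree $d^\ast=s+2-a_{r+1}$ where the coefficient equals $\#\{p: a_p=a_{r+1}\}>0$. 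I checked your exponent computations ($e_p+a_{p,p}+\sigma_p=s+2-a_p$ and $e_p+\sigma_p=s+2-b_p$, with $s+2=\sum_j b_j-\sum_i a_i$ after normalizing $a_1=0$), the bounds ruling out the bottom corners ($e_p+a_{p,p}\le\sum_{i\le t}a_{i,i}=d^\ast-(b_{t+1}-a_{r+1})<d^\ast$, using $a_{r+1,r+1}>0$), and the exclusion of a cancelling $-1$ at $d^\ast$ via the no-zeros hypothesis ($b_p=a_{r+1}$ would mean $a_{r+1,p}=0$); all are sound. What each approach buys: the paper's proof is shorter and leans on the rigidity of codimension-two resolutions, which is precisely what makes this case of the conjecture tractable; yours is elementary and self-contained, avoids liaison and Hilbert--Burch entirely, and is more quantitative, locating exactly where purity fails and by how much. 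It is a pleasing parallel that both proofs use the no-zeros hypothesis for the same underlying reason: zeros in the degree matrix are exactly the coincidences $b_p=a_i$ that would allow cancellation --- ghost terms in the resolution in the paper's argument, a $-1$ corner annihilating your witness $+1$ in yours.
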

\begin{proof}
Assume that $h^{A}$ is a pure O-sequence and let $B$ be the artinian reduction of $S/I_{X}$.
Then, there exists an artinian monomial level algebra $R/J$, where $R=K[x_1,x_2]$, such that
$h^{A}=HF_{B}=HF_{R/J}$. Since $A$ has no zeros, and we are in
codimension two, by the Hilbert-Burch theorem, (see for instance \cite[Theorem 20.15]{Ei95}) the Hilbert function of $B$ determines uniquely its minimal free resolution, and also the one of $S/I_{X}$. Thus $R/J$ being level implies that also $B$ is level. The claim follows
now from Proposition \ref{prop:level}.  
\end{proof}

\begin{remark}\label{rem:bound} 

If $h=(1,c,h_2,\ldots,h_s)$ is a pure O-sequence, then by counting monomials and divisors of monomials  in each degree, one easily obtains  that 
$$h_{s-i}\leq\min\left\{\binom{c-1+s-i}{c-1}, \,\,h_s\cdot
\binom { c- 1+i}{c-1}\right\}, \quad\forall~i=0,\dots,s.$$
\end{remark}

The next result shows that our conjecture holds when the second-largest entry in the first column of the degree matrix is positive. In particular, it holds for matrices with all entries positive. 

\begin{proposition}\label{prop:ar+1,1>0}
 Let $X\subseteq \mathbb{P}^n$  be codimension $c$ standard determinantal
scheme, whose degree matrix $A=\left(a_{i,j}\right)\in \mathbb{Z}^{t\times (t+c-1)}$ has
$r$ equal maximal rows, with $r<t$,  and no zeros. If  $a_{r+1,1}>0$, then $h^A$ is not a pure O-sequence. 
\end{proposition}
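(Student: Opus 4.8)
The plan is to assume, for contradiction, that $h^A$ is a pure O-sequence and then to violate the purity bound of Remark \ref{rem:bound}. Write $s=\tau(h^A)$ and set $i_0=a_{1,1}-a_{r+1,1}$; since $0<a_{r+1,1}<a_{1,1}$ we have $1\le i_0<a_{1,1}$. Two facts are immediate and worth recording first. By Lemma \ref{lem:length}(ii) the leading coefficient is $h^A_s=\binom{r+c-2}{c-1}$. Moreover $r<t$ forces $t\ge 2$, so every maximal minor of $M$ has degree at least $\tr(A)\ge t\ge 2$; hence $I_X$ contains no linear form and $h^A_1=c$, which is exactly what is needed for the inequality $h^A_{s-i}\le h^A_s\binom{c-1+i}{c-1}$ of Remark \ref{rem:bound} to be applicable in $c$ variables.

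Next I would analyze the \emph{equal-rows model}. Let $\bar A\in\mathbb{Z}^{r\times(r+c-1)}$ be the upper-left block of $A$ (which has equal rows) and $\bar s=\tau(h^{\bar A})$. For $0\le i\le i_0$ I apply Proposition \ref{prop:lastentries} to $\bar A$: because $i_0<a_{1,1}$ and every column value of $\bar A$ is at least $a_{1,1}$, each binomial $\binom{c+i-1-a_{j_1}-\cdots-a_{j_\alpha}}{c-1}$ in the correction sum vanishes, leaving
\[
h^{\bar A}_{\bar s-i}=\binom{r+c-2}{c-1}\binom{c-1+i}{c-1}=h^A_s\binom{c-1+i}{c-1}.
\]
Thus along the model the bound of Remark \ref{rem:bound} is attained with equality; the strategy is to show that the full $h^A$ exceeds it precisely at step $i_0$.

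The heart of the argument is the comparison
\[
h^A_{s-i}=h^{\bar A}_{\bar s-i}\ \ (0\le i<i_0),\qquad h^A_{s-i_0}>h^{\bar A}_{\bar s-i_0},
\]
which I would prove by induction on $t\ge r+1$. Lemma \ref{key} (equivalently Remark \ref{rem:compute}) with $(k,l)=(t,t+c-1)$ gives $h^A_{s-i}=h^{A'}_{s'-i}+h''_{s-i}$, where $A'=A^{(t,t+c-1)}$ is the $(t-1)\times(t+c-2)$ block, $s'=\tau(h^{A'})=s-a_{t,t+c-1}$ by Lemma \ref{lem:length}(i), and $h''\ge 0$ is the convolution of $h^{A^{(0,t+c-1)}}$. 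Crucially $A'$ has the same $r$, the same entry $a_{r+1,1}$, and the same block $\bar A$, hence the same $i_0$. A short computation from Lemma \ref{lem:length}(i) yields $\tau(h'')=s-(a_{1,1}-a_{t,1})$; since $a_{t,1}\le a_{r+1,1}$ we get $\tau(h'')\le s-i_0$, so $h''_{s-i}=0$ for $i<i_0$ and therefore $h^A_{s-i}=h^{A'}_{s'-i}$ for $i<i_0$, while $h^A_{s-i_0}=h^{A'}_{s'-i_0}+h''_{s-i_0}\ge h^{A'}_{s'-i_0}$. In the base case $t=r+1$ one has $A'=\bar A$ and $a_{t,1}=a_{r+1,1}$, so $\tau(h'')=s-i_0$ exactly and $h''_{s-i_0}$ equals the leading coefficient of $h^{A^{(0,t+c-1)}}$, which is $\binom{r+c-3}{c-2}>0$ by Lemma \ref{lem:length}(ii); this gives equality for $i<i_0$ and strictness at $i_0$. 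For $t>r+1$ the inductive hypothesis on $A'$ together with $h''_{s-i_0}\ge 0$ reproduces the same conclusion. Combining with the model computation we obtain $h^A_{s-i_0}>h^{\bar A}_{\bar s-i_0}=h^A_s\binom{c-1+i_0}{c-1}$, contradicting Remark \ref{rem:bound}, so $h^A$ is not a pure O-sequence.

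The main obstacle will be this inductive comparison, specifically the identity $\tau(h'')=s-(a_{1,1}-a_{t,1})$, which is what guarantees that the rows below row $r+1$ contribute nothing to the top $i_0+1$ coefficients of $h^A$, so that the defect is localized at position $s-i_0$ and survives the addition of lower rows; the base case then reduces to the positivity of the leading term of $h''$. Everything else is bookkeeping with the recursion of Lemma \ref{key} and the closed form of Proposition \ref{prop:lastentries}.
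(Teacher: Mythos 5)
Your proposal is correct and takes essentially the same route as the paper's proof: both compare the top of $h^A$ with the equal-rows block $\bar A$ via Lemma \ref{key}/Remark \ref{rem:compute}, locate the excess $\binom{r+c-3}{c-2}$ at position $s-(a_{1,1}-a_{r+1,1})$ using Proposition \ref{prop:lastentries} and Lemma \ref{lem:length}, and then contradict the bound $h_{s-i}\le h_s\binom{c-1+i}{c-1}$ of Remark \ref{rem:bound}. Your explicit induction on $t$ (including the identity $\tau(h'')=s-(a_{1,1}-a_{t,1})$ and the base case $t=r+1$) simply spells out the details the paper delegates to Remark \ref{rem:compute}.
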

\begin{proof}
 As $a_{r+1,1}>0$, by Remark \ref{rem:compute} and by Lemma \ref{lem:length} we have

 \[h^A_{s-a_{1,1}+a_{r+1,1}}\ge\displaystyle{\binom{r+c-2}{c-1}\cdot\binom{c-1+i}{c-1}+\binom{r+c-3} {c-2 } }. \]
By Lemma \ref{lem:length}  the last entry of $h^A$ is $h_s=\binom{r+c-2}{c-1}$. At the beginning of this section we assumed that $c\ge2$, so Remark \ref{rem:bound} implies that $h^A$ is not a pure O-sequence.
\end{proof}
%
%
%
 
Hibi proved in \cite{Hi89} that all pure O-sequences are flawless i.e. $h_i\leq h_{s-i}$ for $i = 0,\dots,\lfloor s/2 \rfloor$.

\begin{proposition}\label{prop:nohibi} Let   $A=(a_{i,j})\in \mathbb{Z}^{t\times (t+c-1)}$ be a degree matrix and  $h^A=(h_0,\ldots,h_s)$ be the corresponding $h$-vector.
If $a_{2,1}<0$,  then there exists an integer $i_0$ such that $h_{i_0}>h_{s-i_0}$. In particular, $h^A$ is not a pure O-sequence.
\end{proposition}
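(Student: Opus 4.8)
The goal of Proposition \ref{prop:nohibi} is to show that when $a_{2,1}<0$, the $h$-vector fails the Hibi flawlessness inequalities, and hence cannot be pure. Since Hibi's theorem guarantees $h_i\le h_{s-i}$ for all pure O-sequences, it suffices to exhibit a single index $i_0$ with $h_{i_0}>h_{s-i_0}$. My plan is to locate such an $i_0$ near the \emph{top} of the $h$-vector, where the hypothesis $a_{2,1}<0$ forces the last entries to be small while the symmetric early entries remain large.

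First I would extract the relevant data about the last entries from the earlier results. By Lemma \ref{lem:length}(ii), the leading coefficient is $h_s=\binom{r+c-2}{c-1}$, where $r$ is the number of maximal equal rows. The hypothesis $a_{2,1}<0$ means that already the second row strictly drops below the first, so $r=1$ and therefore $h_s=\binom{c-1}{c-1}=1$. This is the crucial numerical consequence: the top coefficient is forced to be $1$. I would then feed this into Remark \ref{rem:bound}, which for a pure O-sequence gives $h_{s-i}\le h_s\cdot\binom{c-1+i}{c-1}$; with $h_s=1$ this bounds the last few entries very tightly, in particular controlling the behavior of $h_{s-i}$ for small $i$.

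The heart of the argument is to compare these small top-end values against the corresponding early values $h_{i_0}$ with $i_0$ small relative to $s$. We know $h_0=1$ and $h_1=c$ (the codimension), since $S/I_X$ is standard graded with $c$ generators contributing in degree one; more generally the early part of the $h$-vector agrees with that of a suitable complete intersection and grows like $\binom{c-1+i}{c-1}$. Choosing $i_0$ so that $s-i_0$ is small (say comparing $h_{s-i}$ against $h_i$ for an appropriate $i$), the flawlessness inequality would demand $h_i\le h_{s-i}$, but the constraint forced by $h_s=1$ and $a_{2,1}<0$ makes the top entries too small to accommodate the genuinely larger early entries. Concretely, I would identify the smallest index $i$ where the growth of the lower end overtakes the suppressed upper end and set $i_0$ to the symmetric partner. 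The flawlessness violation then reads $h_{i_0}>h_{s-i_0}$, and Hibi's theorem yields non-purity.

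The main obstacle I anticipate is making the comparison rigorous across \emph{all} admissible parameter values $(t,c)$ and all degree matrices with $a_{2,1}<0$, rather than just in a favorable special case. The early entries $h_{i_0}$ are pinned down by the complete-intersection-like behavior of the leading part of the Hilbert series, but the precise value of $s=\tau(h^A)$ (given by Lemma \ref{lem:length}(i)) and the exact profile of the top entries depend on the individual degrees $a_{i,j}$. The delicate point is to guarantee that the index $i_0$ I select actually lies in the valid range $0\le i_0\le\lfloor s/2\rfloor$ and that the strict inequality survives in every edge case, especially when $s$ is small or when the degrees conspire to make the $h$-vector short. I would handle this by using the recursive description in Remark \ref{rem:compute} together with the explicit last-entry bound, reducing the general case to the structural fact that $r=1$ caps $h_s$ at $1$ while the lower end is demonstrably larger than $1$ at the symmetric position.
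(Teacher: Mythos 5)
There is a genuine gap, and it sits at the heart of your argument. You use the hypothesis $a_{2,1}<0$ only to conclude $r=1$, hence $h_s=1$, and then hope that Remark \ref{rem:bound} (which with $h_s=1$ gives $h_{s-i}\le\binom{c-1+i}{c-1}$ for a pure O-sequence) forces the top entries below the early ones. But this bound is non-strict and is actually attained: the $h$-vector of a complete intersection has $h_s=1$, is symmetric with $h_{s-i}=h_i$, and \emph{is} a pure O-sequence, so nothing about ``$r=1$ caps $h_s$ at $1$'' can produce a flawlessness violation by itself. The paper's own example $D$ makes this concrete in the opposite regime: it has one maximal row and all entries positive ($a_{2,1}=1>0$), and $h^D=(1,3,6,4,1)$ satisfies Hibi's inequalities — its non-purity is detected by Proposition \ref{prop:ar+1,1>0}, not by a Hibi violation. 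So the condition $a_{2,1}<0$ must enter quantitatively, not merely through $r=1$, and your proposal never uses it that way; the ``smallest index where the lower end overtakes the suppressed upper end'' need not exist under the information you have assembled.

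What the paper actually does, and what your sketch is missing, consists of two precise facts in which the size of $a_{2,1}$ matters. First, by Remark \ref{rem:recursive} (see also Remark \ref{rem:compute}), the last $a_{1,1}-a_{2,1}$ entries of $h^A$ coincide with those of the complete intersection $(a_{1,1},\ldots,a_{1,c})$; since $a_{2,1}<0$, this window extends \emph{past} the index $i=a_{1,1}$. Second, by the $r=1$ case of Proposition \ref{prop:lastentries}, those complete intersection entries equal $\binom{c-1+i}{c-1}-\binom{c-1+i-a_{1,1}}{c-1}$, which is \emph{strictly} less than $\binom{c-1+i}{c-1}$ precisely once $i\ge a_{1,1}$ — so negativity of $a_{2,1}$ is exactly what makes the interval $i_0\in\{a_{1,1},\ldots,a_{1,1}-a_{2,1}-1\}$ nonempty. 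Meanwhile, the initial degree of the ideal is $\sum_j a_{j,j}$, so $h_{i}=\binom{c-1+i}{c-1}$ exactly (not just asymptotically, as you gesture at) for all $i$ up to $\sum_j a_{j,j}-1$, a range containing the interval above. Combining the two gives the strict inequality $h_{i_0}>h_{s-i_0}$. Your worry about $i_0\le\lfloor s/2\rfloor$ is legitimate but secondary; the fatal issue is that without the coincidence-with-a-CI-tail argument and the strict drop at $i=a_{1,1}$, no contradiction with Hibi's theorem is ever reached.
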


\begin{proof}   
According to Remark \ref{rem:recursive}, 
$h^A_{s-i}=h_{s-i}^{(a_{1,1},\ldots,a_{1,c})},$ for  $i=0,\ldots,(a_{1,1}-a_{2,1}-1).$   
By Proposition \ref{prop:lastentries}, for all $i=0,\ldots,a_{1,2}-1$ we have  
$$h_{s-i}^{(a_{1,1},\ldots,a_{1,c})}=\displaystyle\binom{c-1+i}{c-1}-\binom{c-1+i-a_{1,1}}{c-1}.$$
In particular, as $a_{1,2}-1 = a_{1,1}+a_{2,2}-a_{2,1}-1 > a_{1,1} - a_{2,1} -1\ge a_{1,1}$, we obtain 
$$h_{s-i}^{(a_{1,1},\ldots,a_{1,c})}<\displaystyle\binom{c-1+i}{c-1}, \quad \textup{~for every~}  i = a_{1,1},\ldots,(a_{1,1}-a_{2,1}-1).$$
Thus, as $h^A_{i}=\binom{c-1+i}{c-1}$ for all $i=0,\ldots,\sum_{j=0}^{t}a_{j,j}-1$, every index $ i_0 \in \{a_{1,1},\ldots,a_{1,1}-a_{2,1}-1\}$ satisfies $h_{i_0}>h_{s-i_0}$.
\end{proof}

Propositions \ref{prop:ar+1,1>0} and  \ref{prop:nohibi} have the following direct consequence.
\begin{corollary}
 Conjecture \ref{conj:main} holds for any degree matrix  with only one maximal row.
\end{corollary}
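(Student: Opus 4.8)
The plan is to read Conjecture \ref{conj:main} as an equivalence and to observe that, for matrices with a single maximal row, the two preceding propositions already cover every possibility. The implication ``equal rows $\Rightarrow$ $h^A$ pure'' is valid for \emph{any} degree matrix by Theorem \ref{thm:main}, so only the reverse implication must be addressed; I would establish it in contrapositive form. Throughout I use the standing assumption of this section that $c\ge 2$.

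First I would treat the degenerate case $t=1$. A $1\times c$ degree matrix has a single row and therefore trivially ``has equal rows'', while its $h$-vector is that of a complete intersection and hence the pure O-sequence recorded in \eqref{eq:ci}. Thus both sides of the equivalence hold, and the corollary is true when $t=1$; from now on I assume $t\ge 2$.

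For $t\ge 2$, having only one maximal row means $r=1$, i.e.\ $a_{1,1}>a_{2,1}$, so $A$ does \emph{not} have equal rows; to confirm the conjecture I must show that $h^A$ is not a pure O-sequence. The decisive point is that the hypothesis ``$A$ has no zeros'' forces $a_{2,1}=a_{r+1,1}\ne 0$, so the sign of this entry splits the argument into exactly two cases. If $a_{2,1}>0$, then since $r=1<t$ the hypotheses of Proposition \ref{prop:ar+1,1>0} are met, and it gives directly that $h^A$ is not pure. If $a_{2,1}<0$, then Proposition \ref{prop:nohibi} produces an index $i_0$ with $h_{i_0}>h_{s-i_0}$, violating Hibi's flawlessness inequality, so again $h^A$ is not pure.

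I do not expect any genuine obstacle: the corollary is a direct repackaging of Propositions \ref{prop:ar+1,1>0} and \ref{prop:nohibi}, and the only step requiring attention is the remark that the ``no zeros'' assumption rules out the boundary value $a_{2,1}=0$, so that these two propositions jointly exhaust all cases with $t\ge 2$. Concretely, I would simply verify that the numerical hypotheses of each proposition ($r<t$ in the first, $a_{2,1}<0$ in the second) are satisfied in the respective case and invoke them.
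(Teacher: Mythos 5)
Your proof is correct and takes essentially the same route as the paper, which states this corollary as a direct consequence of Propositions \ref{prop:ar+1,1>0} and \ref{prop:nohibi}: your case split on the sign of $a_{2,1}=a_{r+1,1}$, with the no-zeros hypothesis of Conjecture \ref{conj:main} excluding $a_{2,1}=0$, is exactly the intended argument. The extra bookkeeping you supply (the trivial $t=1$ case and citing Theorem \ref{thm:main} for the ``equal rows $\Rightarrow$ pure'' direction) is sound and merely makes explicit what the paper leaves implicit.
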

The following examples show that Proposition \ref{prop:nohibi} has no easy generalization to matrices with two or more maximal rows.
\begin{example} 
The matrices $A, B$ and their  upper left $3\times4$ submatrices $A^{(4,5)},
B^{(4,5)}$ show that the conditions $a_{r+1,r}<0$ and $a_{t,t-1}<0$    do not
influence flawlessness. Clearly $h^A$ and
$h^{A^{(4,5)}}$ are flawless, while $h^B$ and $h^{B^{(4,5)}}$ are
not. A quick exhaustive computer search shows that none of the four is a pure
O-sequence. 
\begin{eqnarray*}
A\,\,\,\, &=& \left(\begin{array}{rrrrr} 
2&2&5&5&5\\
2&2&5&5&5\\
-2&-2&\phantom{-}1&\phantom{-}1&\phantom{-}1\\
-2&-2&1&1&1
\end{array}
\right) 
\qquad\qquad 
B \,\,\,\,\,\,\,=\,\,\, \left(\begin{array}{rrrrr} 
1&2&5&5&5\\
1&2&5&5&5\\
-3&-2&\phantom{-}1&\phantom{-}1&\phantom{-}1\\
-3&-2&1&1&1\\
\end{array}
\right)\\
\rule{0pt}{4ex}h^A\,\,&=&(\,1\,,\, 2\,,\, 3\,,\, 4\,,\, 5\,,\, 6\,,\, 4\,,\,4\,,\, 4\,,\, 2\,) \qquad\quad\,\,\,\, h^B \,\,\,\,\,=\,\,\, (\,1\,,\, 2\,,\, 3\,,\, 4\,,\, 5\,,\, 3\,,\, 3\,,\, 3\,,\, 2\,)\\
\rule{0pt}{4ex}h^{A^{(4,5)}}\!\!\!\!\!\!&=&(\,1\,,\, 2\,,\, 3\,,\, 4\,,\, 5\,,\,4\,,\,4\,,\, 4\,,\, 2\,) \qquad\qquad\,\,\,\,\, h^{B^{(4,5)}}\!\!\!=\,\,\, (\,1\,,\, 2\,,\, 3\,,\, 4\,,\, 3\,,\, 3\,,\, 3\,,\, 2\,)
 \end{eqnarray*}
 The matrices $C$ and $D$ below show that  for one maximal  row and all entries positive both situations may appear, namely $h^C$ does not satisfy Hibi's inequalities, while $h^D$ does. By Proposition \ref{prop:ar+1,1>0} non of them is a pure O-sequence.
 \begin{eqnarray*}
C &=& \left(\begin{array}{rrrr} 
3&3&3&3\\
1&1&1&1\\
\end{array}
\right) 
\qquad\qquad\qquad \qquad\quad\,\,\,\,\,\,\,
D \,\,\,=\,\,\, \left(\begin{array}{rrrr} 

2&2&2&2\\
1&1&1&1\\\end{array}
\right)\phantom{23456788762}\\
\rule{0pt}{4ex}h^C\,\,&=&(\,1\,,\, 3\,,\, 6\,,\, 10\,,\, 9\,,\, 7\,,\, 3\,,\,1\,) \qquad\qquad\quad\,\,\,\, h^D \,\,\,\,\,=\,\,\, (\,1\,,\, 3\,,\, 6\,,\, 4\,,\, 1\,)\\
 \end{eqnarray*}
 \end{example}

\bibliographystyle{alpha}
\bibliography{sdv}

\newcommand{\etalchar}[1]{$^{#1}$}
\begin{thebibliography}{KMMR{\etalchar{+}}01}

\bibitem[Abh88]{Abh88}
S.S. Abhyankar.
\newblock Determinantal loci and enumerative combinatorics of {Y}oung tableaux.
\newblock In {\em Algebraic geometry and commutative algebra, {V}ol.\ {I}},
  pages 1--26. Kinokuniya, Tokyo, 1988.

\bibitem[Bae06]{Bae06}
C.~Baetica.
\newblock {\em Combinatorics of determinantal ideals.}
\newblock New York, NY: Nova Science Publishers, 2006.

\bibitem[BC03]{BC03}
W.~Bruns and A.~Conca.
\newblock Gr\"obner bases and determinantal ideals.
\newblock In {\em Commutative algebra, singularities and computer algebra
  ({S}inaia, 2002)}, volume 115 of {\em NATO Sci. Ser. II Math. Phys. Chem.},
  pages 9--66. Kluwer Acad. Publ., Dordrecht, 2003.

\bibitem[BCG04]{BCG04}
N.~Budur, M.~Casanellas, and E.~Gorla.
\newblock Hilbert functions of irreducible arithmetically {G}orenstein schemes.
\newblock {\em J. Algebra}, 272(1):292--310, 2004.

\bibitem[BMMR{\etalchar{+}}12]{BMMNZ}
M.~Boij, J.~Migliore, R.~Mir\`{o}-Roig, U.~Nagel, and F.~Zanello.
\newblock On the shape of a pure {$O$}-sequence.
\newblock {\em Mem. Amer. Math. Soc.}, 218(1024):viii+78, 2012.

\bibitem[BV88]{BV88}
W.~Bruns and U.~Vetter.
\newblock {\em Determinantal rings}, volume 1327 of {\em Lecture Notes in
  Mathematics}.
\newblock Springer, Berlin, 1988.

\bibitem[CH94]{CH94}
A.~Conca and J.~Herzog.
\newblock On the {H}ilbert function of determinantal rings and their canonical
  module.
\newblock {\em Proc. Amer. Math. Soc.}, 122(3):677--681, 1994.

\bibitem[CV12]{CV3}
A.~Constantinescu and M.~Varbaro.
\newblock $h$-vectors of matroid complexes.
\newblock submitted, \href{http://arxiv.org/abs/1212.3226}{arxiv: 1212.3226},
  2012.

\bibitem[Eis95]{Ei95}
D.~Eisenbud.
\newblock {\em Commutative algebra}, volume 150 of {\em Graduate Texts in
  Mathematics}.
\newblock Springer-Verlag, New York, 1995.
\newblock With a view toward algebraic geometry.

\bibitem[EN62]{EN62}
J.~A. Eagon and D.~G. Northcott.
\newblock Ideals defined by matrices and a certain complex associated with
  them.
\newblock {\em Proc. Roy. Soc. Ser. A}, 269:188--204, 1962.

\bibitem[Gho96]{Gho96}
S.R. Ghorpade.
\newblock Young multitableaux and higher-dimensional determinants.
\newblock {\em Adv. Math.}, 121(2):167--195, 1996.

\bibitem[Gho02]{Gho02}
S.R. Ghorpade.
\newblock Hilbert functions of ladder determinantal varieties.
\newblock {\em Discrete Math.}, 246(1-3):131--175, 2002.
\newblock Formal power series and algebraic combinatorics (Barcelona, 1999).

\bibitem[Gor07]{Go07}
E.~Gorla.
\newblock Lifting the determinantal property.
\newblock In {\em Algebra, geometry and their interactions}, volume 448 of {\em
  Contemp. Math.}, pages 69--89. Amer. Math. Soc., Providence, RI, 2007.

\bibitem[Har07]{Ha07}
R.~Hartshorne.
\newblock Generalized divisors and biliaison.
\newblock {\em Illinois Journal of Mathematics}, 51(1):83--98, 2007.

\bibitem[Hib89]{Hi89}
T.~Hibi.
\newblock What can be said about pure o-sequences?
\newblock {\em J. Combin. Theory Ser. A}, 50(2):319--322, 1989.

\bibitem[HK12]{HK12}
J.~Huh and E.~Katz.
\newblock Log-concavity of characteristic polynomials and the {B}ergman fan of
  matroids.
\newblock {\em Math. Ann.}, 354(3):1103--1116, 2012.

\bibitem[{Huh}12]{Hu12}
J.~{Huh}.
\newblock {h-Vectors of matroids and logarithmic concavity}.
\newblock {\em ArXiv e-prints}, January 2012.

\bibitem[KMMR{\etalchar{+}}01]{KMMNP01}
J.~Kleppe, J.~Migliore, R.M. Mir{\'o}-Roig, U.~Nagel, and C.~Peterson.
\newblock Gorenstein liaison, complete intersection liaison invariants and
  unobstructedness.
\newblock {\em Mem. Amer. Math. Soc.}, 154(732):viii+116, 2001.

\bibitem[Mac27]{Mac27}
F.S. Macaulay.
\newblock Some properties of enumeration in the theory of modular systems.
\newblock {\em Proc. London Math. Soc.}, 26:531--555, 1927.

\bibitem[MR08]{MiR08}
R.M. Mir{\'o}-Roig.
\newblock {\em Determinantal ideals}, volume 264 of {\em Progress in
  Mathematics}.
\newblock Birkh\"auser Verlag, Basel, 2008.

\bibitem[Oxl11]{Ox11}
J.~Oxley.
\newblock {\em Matroid theory}, volume~21 of {\em Oxford Graduate Texts in
  Mathematics}.
\newblock Oxford University Press, Oxford, second edition, 2011.

\bibitem[Zie95]{Zi95}
G.~Ziegler.
\newblock {\em Lectures on polytopes}, volume 152 of {\em Graduate Texts in
  Mathematics}.
\newblock Springer-Verlag, New York, 1995.

\end{thebibliography}

\end{document}